\theoremstyle{plain}
\newtheorem{Thm}{Theorem}
\newtheorem*{Thm*}{Theorem}
\newtheorem{Prop}{Proposition}
\newtheorem{Lem}[Prop]{Lemma}
\newtheorem{Cor}[Prop]{Corollary}
\theoremstyle{definition}
\newtheorem{Def}{Definition}
\newtheorem{Hyp}{Hypothesis}
\theoremstyle{remark}
\newtheorem{Rem}{Remark}
\newtheorem{Example}{Example}
\newcommand{\N}{\mathbb{N}}
\newcommand{\R}{\mathbb{R}}
\newcommand{\Rn}{{\R^n}}
\DeclareMathOperator{\diag}{diag}
\DeclareMathOperator{\supp}{supp}
\renewcommand{\doteq}{:=}
\newcommand{\lloc}{{\mathrm{loc}}}
\newcommand{\expo}{{\kappa}}
\newcommand{\Fuj}{{\,\mathrm{Fuj}}}
\newcommand{\Kato}{{\,\mathrm{Kat}}}
\newcommand{\Sob}{{\,\mathrm{Sob}}}
\newcommand{\Str}{{\,\mathrm{Str}}}
\newcommand{\Crit}{{\mathrm{C}}}
\def\<#1\>{\left\langle#1\right\rangle }
\begin{document}

\baselineskip15pt

\title[A modified test function method for damped wave equations]
{A modified test function method\\for damped wave equations}

\author{Marcello D'Abbicco \ {\it \& } Sandra Lucente}
\address{Dipartimento di Matematica,
Universit\`a degli Studi di Bari,  Via Orabona 4, 70125 Bari, Italy} \email{dabbicco@dm.uniba.it, lucente@dm.uniba.it}

\begin{abstract}
In this paper we use a \emph{modified} test function method to derive nonexistence results for the semilinear wave equation with time-dependent speed and damping. The obtained critical exponent is the same exponent of some recent results on global existence of small data solution.
\end{abstract}

\keywords{semi-linear equations, damped wave equations, critical exponent, test function method}

\subjclass[2010]{35L71, 35D05}
\maketitle


\section{Introduction}\label{sec:intro}

We consider the Cauchy problem for the wave equation with time-dependent speed and damping
\begin{equation}
\label{eq:dissblow}
\begin{cases}
u_{tt}-a(t)\triangle u+b(t)u_t=f(t,x)\,|u|^p, & t\geq0, \ x\in\R^n,\\
u(0,x)=u_0(x), \\
u_t(0,x)=u_1(x).
\end{cases}
\end{equation}
with $a(t)>0, b(t)>0, f(t,x)>0$ and $p>1$.
Recently a big effort has been done for proving global existence for \eqref{eq:dissblow}. In small data context this means to find an exponent $p_\Crit$ such that the local-in-time solution can be extended to a global one, for any $p>p_\Crit$. One may ask if this exponent $p_\Crit$ is \emph{critical}, that is, if for $p<p_\Crit$ a non-existence result can be established.
\\
In the case $a=b=f=1$ Todorova and Yordanov~\cite{TY} proved that~$p_\Crit=1+2/n$ is critical. The nonexistence result for~$p=p_\Crit$ has been established in \cite{zhang}. In the case~$a=f=1$ and for $b(t)=b_0\,(1+t)^\beta$, with~$|\beta|<1$, the exponent~$p_\Crit=1+2/n$ is still critical, see \cite{LNZ}.
Recently global existence of small data solutions for~$p>p_\Crit$ has been proved in \cite{DLR} for a general damping coefficient~$b=b(t)$ sufficiently regular, provided that it is \emph{effective}, that is, the damped wave equation inherits \emph{parabolic} properties, see~\cite{W07}. If~$f=f(t)$ then the global existence holds for $p>\overline{p}$, where $\overline{p}$ depends on the interaction between the asymptotic behaviors of $b(t)$ and $f(t)$, see~\cite{DA12}.
\\
On the other hand, if we have a space-dependent damping $b(x)u_t$ the critical exponent $p_\Crit$ is modified by the decaying behavior of $b(x)$, see \cite{ITY}. For the existence result with damping term depending on time and space variables, we address to \cite{LNZ11, W}.
\\
If we consider global existence of classical solutions for semilinear waves with time-dependent propagation speed $a(t)$, no damping and $f=1$, the range of admissible exponents $p$ for large data depend on $a(t)$, see \cite{D, DD, FL}. If, moreover, $f=f(t)$, this range is modified by the interaction between the order of zeros of $a(t)$ and the order of zeros of $f(t)$, see \cite{Lu}. A global existence result for Schr\"odinger operator with time-dependent coefficients has been obtained in \cite{Fanelli}.
\\
The interaction between the coefficients~$a(t)$ and~$b(t)$ comes into play also if one studies linear decay estimates for the wave equation with time-dependent speed and damping, see~\cite{DAE12, DAE13, R}.
\\
We expect a nonexistence result for  weak solution to \eqref{eq:dissblow} for $p\le \overline{p}$ where $\bar p$ depends on the interaction between $a(t),b(t)$ and $f(t,x)$. We will use the following definition of weak solutions.
\begin{Def}\label{Def:weakSol1} Let us consider \eqref{eq:dissblow} with $u_0, u_1\in L^1_\lloc$.
We assume that~$a\in L^\infty_\lloc([0,\infty)), b\in W^{1,\infty}_\lloc([0,\infty)), f\in L^\infty_\lloc([0,\infty)\times \R^n)$.\\
We say that~$u\in L^1_\lloc([0,\infty)\times\R^n)$ is a global \emph{weak solution} to~\eqref{eq:dissblow} if $|u|^pf \in L^1_\lloc([0,\infty)\times\R^n)$ and for any~$\Phi\in\mathcal{C}_c^\infty([0,+\infty)\times \R^{n})$ it holds
\begin{multline*}
\int_0^\infty\int_{\R^n} u(t,x) \left( \Phi_{tt}(t,x) - a(t) \triangle \Phi(t,x) - b(t) \Phi_t(t,x) - b'(t)\Phi(t,x)\right) \,dxdt \\
    = \int_0^\infty \int_{\R^n} f(t,x)\,|u(t,x)|^p \Phi(t,x)\,dxdt \\
        + \int_{\R^n} \left( (u_1(x)+u_0(x)\,b(0))\,\Phi(0,x) - u_0(x)\,\Phi_t(0,x) \right)\,dx \,.
\end{multline*}
\end{Def}
\noindent
It follows that classical solutions are weak solutions, and that $\mathcal C^2$ weak solutions are classical solutions.

We consider a class of damping coefficients~$b(t)$ satisfying the following.
\begin{Hyp}\label{Hyp:blowup}
Let~$b\in\mathcal{C}^1([0,\infty))$ be such that $b(t)>0$ for any $t\geq0$. We assume that
\begin{align}
\label{eq:liminfblow} \liminf_{t\to\infty} \frac{b'(t)}{b(t)^2} >-1\,,\\
\label{eq:b1blow} \limsup_{t\to\infty} \frac{tb'(t)}{b(t)} <1\,.
\end{align}
\end{Hyp}
\begin{Def}
Let us define
\[ B(t)\doteq \int_0^t \frac1{b(\tau)}\,d\tau. \]
Since~$b(t)>0$, the function~$B:[0,\infty)\to [0,\infty)$ is strictly increasing.
\end{Def}
In  Remark~\ref{Rem:lambda}, using~\eqref{eq:b1blow}, we will show that $b(t)\lesssim t^m$ for some~$m\in[0,1)$, therefore $1/b\not\in L^1$ so that $B(t)$ is
a bjiection.
\begin{Def}\label{Def:beta}
Let~$b(t)>0$ for any $t>0$. We define
\begin{equation}\label{eq:beta}
\beta(t)\doteq \exp\left(-\int_0^t b(\tau)\,d\tau\right).
\end{equation}
\end{Def}
In  Remark~\ref{Rem:betaL1}, using~\eqref{eq:b1blow}, we will show that~$\beta\in L^1(0,\infty)$. Therefore, for any $t>0$, we may define:
\[ \Gamma(t) \doteq\int_t^\infty \beta(\tau)\,d\tau\,,\qquad \hat{b}_1 \doteq (\Gamma(0))^{-1}= \|\beta\|_{L^1(0,\infty)}^{-1}\,.\]
\begin{Thm}\label{Thm:blowup}
Let~$b(t)$ be as in Hypothesis~\ref{Hyp:blowup} and let us assume that
\begin{itemize}
\item $0<a(t)\lesssim B(t)^{-\alpha}$ for some~$\alpha<1$,
\item $f(t,x)\gtrsim B(t)^\gamma\,|x|^\delta$ for some $\gamma>-1$ and~$\delta\in\R$.
\end{itemize}
If
\begin{equation}\label{eq:pcrit}
p_{\min} < p \leq p_\Crit= 1+ \frac{2(1+\gamma)}{n(1-\alpha)} + \frac{\delta}n \,,
\end{equation}
where
\[ p_{\min} = 1 + \max \left\{ \frac{[\gamma+\alpha]^+}{1-\alpha} , \frac{[\delta]^+}n \right\} \,, \]
then there exists no \emph{weak solution} to the Cauchy problem~\eqref{eq:dissblow} with initial data ~$(u_0,u_1)\in L^1$ satisfying
\begin{equation}\label{eq:datablow}
\int_{\R^n} \big(u_1(x)+\hat{b}_1u_0(x)\big) dx > 0\,.
\end{equation}
\end{Thm}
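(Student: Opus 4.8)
The plan is to run a contradiction argument based on the \emph{modified} test function method: assume a global weak solution $u$ exists, plug into the weak formulation a carefully chosen one-parameter family of compactly supported test functions $\Phi_R$, and derive a bound on $\int f|u|^p$ that is incompatible with the sign condition \eqref{eq:datablow} when $R\to\infty$. The novelty (the ``modified'' part) is that, because the coefficients $a(t),b(t),f(t,x)$ are not homogeneous, the test function must be adapted to the natural scalings: time should be measured through $B(t)$ (so that the term $a(t)\triangle\Phi$, with $a\lesssim B^{-\alpha}$, matches a $\partial_t^2$ acting in the $B$-variable), and the effective damping should be encoded through $\beta(t)$ and $\Gamma(t)$. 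Concretely I would take $\Phi_R(t,x)=\eta\!\big(B(t)/R^{1-\alpha}\big)^{\ell}\,\psi\!\big(|x|/R\big)^{\ell}$ (with $\ell=p'$ large, $\eta,\psi$ smooth cutoffs equal to $1$ near $0$ and supported in the unit ball), possibly multiplied by a weight built from $\Gamma(t)$ to absorb the first-order terms $b(t)\Phi_t+b'(t)\Phi$; the point of Hypothesis~\ref{Hyp:blowup}, via \eqref{eq:liminfblow}--\eqref{eq:b1blow}, is precisely to guarantee (as announced in Remarks~\ref{Rem:lambda} and~\ref{Rem:betaL1}) that $b\lesssim t^m$ with $m<1$, that $B$ is a bijection, and that $\beta\in L^1$, so these weights are well-defined and well-behaved, and the boundary term at $t=0$ reproduces $\int(u_1+\hat b_1 u_0)$ up to a positive factor.

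First I would record the identity coming from Definition~\ref{Def:weakSol1} with $\Phi=\Phi_R$: the left-hand side is
\[
\int_0^\infty\!\!\int_{\R^n} u\big(\Phi_{R,tt}-a\triangle\Phi_R-b\Phi_{R,t}-b'\Phi_R\big)\,dxdt,
\]
and the right-hand side is $\int f|u|^p\Phi_R + \int\big((u_1+b(0)u_0)\Phi_R(0,\cdot)-u_0\Phi_{R,t}(0,\cdot)\big)$. By the choice of cutoff, $\Phi_R(0,x)\to 1$ and (with the $\Gamma$-weight inserted) $\Phi_{R,t}(0,x)\to -\hat b_1$ pointwise, so the boundary term tends to $\int(u_1+\hat b_1u_0)\,dx>0$; hence for large $R$ it is bounded below by a fixed $c_0>0$. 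Then, writing $I_R:=\int f|u|^p\Phi_R$, I would estimate each piece of the left-hand side by Hölder with exponents $p,p'$ against the weight $f^{1/p}\Phi_R^{1/p}$: e.g.
\[
\Big|\int u\,\Phi_{R,tt}\Big|\le I_R^{1/p}\Big(\int f^{-p'/p}|\Phi_{R,tt}|^{p'}\Phi_R^{-p'/p}\Big)^{1/p'},
\]
and similarly for $a\triangle\Phi_R$, $b\Phi_{R,t}$, $b'\Phi_R$. The support of each integrand is contained in the region $B(t)\le R^{1-\alpha}$, $|x|\le R$, i.e. $t\le B^{-1}(R^{1-\alpha})$; counting derivatives, each $\partial_t$ costs a factor $b(t)/R^{1-\alpha}$ (chain rule through $B$) and each $\partial_x$ a factor $1/R$, while $a(t)\lesssim B(t)^{-\alpha}\lesssim R^{-\alpha(1-\alpha)}$ on the support. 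Using the lower bound $f\gtrsim B(t)^\gamma|x|^\delta$, the measure/volume factors produce a power $R^{\kappa(p)}$ of $R$ with
\[
\kappa(p)=n+ (1-\alpha)\Big(1+\tfrac{1}{1-\alpha}\big(1-\tfrac{n(1-\alpha)+\delta}{ } \big)\Big)\cdots
\]
— more transparently, collecting exponents one gets $I_R\le C\,R^{-\theta}\,I_R^{1/p}$ with $\theta=\theta(n,p,\alpha,\gamma,\delta)$, so $I_R\le C'R^{-\theta p'}$, and $\theta>0$ exactly when $p<p_\Crit$ as in \eqref{eq:pcrit}. (The conditions $\gamma>-1$, $\alpha<1$, $\delta\in\R$ guarantee convergence of the relevant $t$- and $x$-integrals near $0$, while $p>p_{\min}$ is what makes the negative-power weights $f^{-p'/p}|x|^{\cdots}$ locally integrable, since $p_{\min}$ is built from $[\gamma+\alpha]^+$ and $[\delta]^+$.)

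To finish: in the subcritical case $p<p_\Crit$ we have $\theta>0$, hence the left-hand side of the weak identity tends to $0$ as $R\to\infty$, forcing $c_0\le 0$, a contradiction; so no weak solution exists. In the critical case $p=p_\Crit$ we have $\theta=0$, so the crude bound only gives $I_R\le C'$, i.e. $f|u|^p\in L^1([0,\infty)\times\R^n)$. Then I would rerun the estimate but, on the annular region where the cutoffs' derivatives live, use dominated convergence: since $\int f|u|^p<\infty$, the integral of $f|u|^p$ over $\{B(t)\ge \tfrac12 R^{1-\alpha}\}\cup\{|x|\ge \tfrac12 R\}$ is $o(1)$, which upgrades the Hölder bound on the left-hand side to $o(1)$, again contradicting the lower bound $c_0>0$. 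I expect the main obstacle to be bookkeeping the \emph{interaction of the two weights} — reconciling the time-scaling governed by $B(t)$ and $\beta(t)$ with the space-scaling, and in particular verifying that the $\Gamma$-weight chosen to kill the first-order terms $b\Phi_t+b'\Phi$ does not deteriorate the power of $R$; this is precisely where Hypothesis~\ref{Hyp:blowup} (through $b\lesssim t^m$, $m<1$, and $\beta\in L^1$) must be used sharply, and where the exponent $p_{\min}$ enters to keep all the negative-power weights integrable.
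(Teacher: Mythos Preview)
Your outline has the right skeleton --- H\"older against $f^{1/p}\Phi_R^{1/p}$, annular support of the derivatives of the cutoff, subcritical case via a negative power of $R$, critical case via dominated convergence on the annulus --- and this is exactly the architecture of the paper's proof. But the step you flag at the end as ``bookkeeping'' is in fact the crux, and your proposal does not carry it out. The dangerous term in the weak formulation is the \emph{zero-order} one, $-b'(t)\Phi$: it is supported on all of $\supp\Phi_R$, not on the annulus where the cutoff varies, so after H\"older the factor $\bigl(\int |b'|^{p'}f^{-(p'-1)}\Phi_R\bigr)^{1/p'}$ does not tend to~$0$ and the final contradiction collapses. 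Inserting ``a weight built from $\Gamma(t)$'' is the right instinct but the wrong function: if $\Phi=h(t)\eta\psi$, the coefficient of $\eta\psi$ in $\Phi_{tt}-b\Phi_t-b'\Phi$ is $h''-(bh)'$, which vanishes iff $h'-bh$ is constant. The paper takes that constant equal to $-1$ with $h(0)=\hat b_1^{-1}$, giving $h(t)=g(t)\doteq\Gamma(t)/\beta(t)$ (not $\Gamma$ alone), and then
\[
D^*\phi=(gL)^*\phi=g\,\phi_{tt}-g\,a\,\triangle\phi+(g'-1)\,\phi_t,
\]
with no zero-order part. Moreover $g\approx1/b$ (from $\Gamma\approx\beta/b$, which is where \eqref{eq:liminfblow} is used) so the first-order coefficient $g'-1=bg-2$ is bounded. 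This is the ``modified'' in the modified test function method, and it must be done explicitly, not deferred.

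Two further corrections once the multiplier $g$ is in place. First, your time scale is off: balancing the $\partial_t$- and $\triangle$-contributions forces $B(t)\sim R^{2/(1-\alpha)}$ (the paper writes $F_0(R)=A(R^d)$ with $d=2/(1-\alpha)$), not $R^{1-\alpha}$. Second, the chain rule gives $\partial_t\eta(B(t)/S)=\eta'\cdot (b(t)S)^{-1}$, so each $\partial_t$ costs $1/(b(t)S)$, not $b(t)/S$; the resulting powers of $b$ are then handled via $b(t)\approx t/B(t)$ and the doubling estimate \eqref{eq:blambdat}. Finally, without the $g$-weight your boundary computation cannot produce $\hat b_1$: with a bare cutoff you would get $\int(u_1+b(0)u_0)$, not \eqref{eq:datablow}. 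With $\Phi=g\eta\psi$ the boundary term becomes $\hat b_1^{-1}\int(u_1+\hat b_1 u_0)$, as needed. After these fixes your power-count will recover $p\le p_\Crit$ from the three contributions $H_{e_0}G_{e_0}^{1/p'}$, $H_{2e_0}G_{2e_0}^{1/p'}$, $H_{2e_i}G_{2e_i}^{1/p'}$, and the hypothesis $p>p_{\min}$ is exactly what makes $|x|^{-\delta(p'-1)}$ locally integrable and $\int B^{-\alpha p'-\gamma(p'-1)}\,dB$ convergent.
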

\begin{Rem}
Theorem~\ref{Thm:blowup} is meaningful if $p_{\min} (\alpha,\gamma,\delta)< p_\Crit(\alpha,\gamma,\delta,n)$. Since~$\alpha<1$ and~$\gamma>-1$, this inequality holds if, and only if,
%
\[ \delta > \frac{n[\gamma+\alpha]^+-2(1+\gamma)}{1-\alpha} \,. \]
%
\end{Rem}
In particular, we have the following.
\begin{Example}\label{Ex:special}
If~$\gamma=-\alpha$ in Theorem~\ref{Thm:blowup}, that is, we assume that
\[ 0<a(t)\lesssim (B(t))^{-\alpha} \,, \qquad f(t,x)\gtrsim (B(t))^{-\alpha}\,|x|^\delta\,,\]
for some~$\alpha<1$, and~$\delta>-2$, then a nonexistence result follows for any
\[ 1+\frac{[\delta]^+}n <p\leq1+ \frac{2+\delta}n \,.\]
Moreover, if we replace the assumption on~$f(t,x)$ with~$f(t,x)\gtrsim B(t)^{-\alpha}\,\<x\>^d$, for some~$d>0$, then we may apply Theorem~\ref{Thm:blowup} for any~$\delta\in[0,d]$, obtaining a range for nonexistence given by~$1<p\leq 1+(2+d)/n$.
\\
For~$\delta=0$ we get the range~$1<p\leq 1+2/n$. In particular, we have a counterpart of the global existence results proved in~\cite{DA12+, DLR} for~$p>1+2/n$ when $\alpha=\gamma=\delta=0$.
\end{Example}
\begin{Example}
Let~$\alpha=\delta=0$ in Theorem~\ref{Thm:blowup}, that is, $a(t)$ is bounded and $f(t,x)\gtrsim B(t)^\gamma$ for some~$\gamma>-1$, and $\gamma<2/(n-2)$ if~$n\geq3$. Then a nonexistence result follows for any~$1+[\gamma]^+<p\leq 1+2(1+\gamma)/n$. In particular, we have a counterpart of some global existence results proved in~\cite{DA12}.
\end{Example}
In order to prove Theorem~\ref{Thm:blowup} we will use a \emph{modified} test function method coupled with a careful study of the properties of~$b(t)$.
\\
The test function method is
based on the scaling invariance property of the operator. Since it works for elliptic, parabolic and hyperbolic equations, the corresponding literature is very extensive. We only quote the papers in which Mitidieri and Pohozaev explain how a suitable choice of the test function
gives a nonexistence result. A deep description of this technique can be found in
\cite{MP}, see 
also~\cite{MP1, MP2, MPhyp}.
Here we use a  \it modified \rm test function method: we
apply the scaling argument on an associated equation obtained from the original one by means of a multiplication by an auxiliary function.

\bigskip

The scheme of the paper is the following. In order to clarify our approach in Section~\ref{sec:LiouCP}, we introduce a general notation and we briefly derive a modified version of the test function method for a class of Liouville and Cauchy problems. In Section~\ref{sec:effective} we prove Theorem~\ref{Thm:blowup} as a corollary of the result established in Section~\ref{sec:CP}. In Section~\ref{sec:Examples} we present some other applications of the results of Section~\ref{sec:LiouCP}, in particular for non-damped wave equations and for damped wave equations with a special mass term.

\smallskip

The \it modified \rm test function method can be extended to Liouville problems for differential inequality or for quasilinear operators, as well as one can deal with Cauchy problems for quasilinear systems. For the sake of brevity, we will not investigate here these arguments.

\subsection{Notation}

In this paper, all functions are assumed to be measurable.

\noindent
\begin{itemize}
\item[-] We omit to write $\Rn $ when considering spaces of functions defined on $\Rn$. In particular, $L^p_\lloc$ stands for
$L^p_{\lloc}(\R^n)$ and so on.
\item[-] Given $m:\R^n\to (0,\infty)$, with $L^p_{\lloc}(m(x)dx)$ we denote the space of real functions $f$ such that $mf \in L^p_{\lloc}$
\item[-] With $\mathcal C_c^k(D)$ we denote  the space of functions belonging to
$\mathcal C^k(D)$ with compact support in a domain $D$.
\item[-] Given $p> 1$, by~$p'$ we mean the conjugate of~$p$, that is, $p'(p-1)=p$.
\item[-] By~$x\cdot M$ or $M\cdot x$ we will denote the product of a vector $x\in\R^n$ with a $n\times n$ matrix~$M$ and vice-versa.
\item[-] In what follows, $e_k$ stands for the vector in~$\N^n$ or $\N^{n+1}$ with zero entries, exception given for the $k$-th, which assumes value~$1$.
\item[-] Let $A(\xi),B(\xi)$ be two positive functions on suitable domains.
We write $A\approx B$ if there exist $C_1,C_2>0$ such that
$C_1A(\xi)\le B(\xi)\le C_2A(\xi)$ a.e. Similarly $A(\xi) \lesssim B(\xi)$ means that there exists
$C>0$, independent of $\xi$,  such that $A(\xi)\le CB(\xi) $ a.e.
\end{itemize}

\section{A modified test function method}\label{sec:LiouCP}

In this section we illustrate the \emph{modified} test function method. First we state notation and results for Liouville problems, whose presentation is simpler, then we show how to extend the approach to the Cauchy problems.

\subsection{The Liouville problem}\label{sec:Liou}

We consider the differential Liouville problem:
\begin{equation}\label{eq:Liou}
L(x,\nabla) \, u = f(x)|u|^p\,,\quad x\in \R^N\,,
\end{equation}
where $Lu$ is a linear operator of order~$m$, $p>1$, and $f\in L^\infty_\lloc$ satisfies~$f(x)>0$ a.e. \\
We denote by~$L^*$ the formal adjoint of~$L$: for any~$u,v\in\mathcal{C}_c^\infty$ it holds
\begin{equation}\label{eq.adjoint}
\int_{\R^N} v(x) L(x,\nabla)u(x) \,dx = \int_{\R^N} u(x) L^*(x,\nabla)v(x) \,dx\,.
\end{equation}
Moreover, we denote by $L(x,\xi)$ the symbol of $L$ obtained by means of the Fourier transform.
\\
For Liouville problem~\eqref{eq:Liou} we will use the following definition of solution.
\begin{Def}\label{Def:sol}
Let~$g(x)>0$ a.e. be such that if we put
\[ D(x,\nabla)\doteq g(x)L(x,\nabla)\,, \]
then~$D^*$ is a differential operator with~$L^\infty_\lloc$ coefficients.
\\
We say that~$u\in L_\lloc^1$ is a \it (weak) $g$-solution \rm to~\eqref{eq:Liou} if $|u|^pfg\in L^1_\lloc$ and for any~$\phi\in\mathcal{C}_c^\infty$, it holds
\begin{equation}\label{eq:sol}
\int_{\R^N} f(x)\,g(x)\,|u(x)|^p \phi(x)\,dx = \int_{\R^N} u(x) D^*(x,\nabla)\phi(x)\,dx\,.
\end{equation}
\end{Def}
\begin{Rem}\label{Rem:integrability}
Since we look for $u\in L_\lloc^p(f(x)g(x)dx)$ and $D^*$ has $L^\infty_\lloc$ coefficients,
both the integrals in~\eqref{eq:sol} are well-defined.
\end{Rem}
\begin{Rem}\label{Rem:weak}
If~$g\in\mathcal{C}^\infty$ then Definition~\ref{Def:sol} is equivalent to the definition of \it weak solution: \rm for any~$\Phi\in\mathcal{C}_c^\infty$ it holds
\begin{equation}\label{eq:sold}
\int_{\R^N} f(x)\,|u(x)|^p \Phi(x)\,dx = \int_{\R^N} u(x) L^*(x,\nabla)\Phi(x)\,dx\,.
\end{equation}
Indeed, $D^*\phi=L^*(g\phi)$ and $1/g$ belongs to $\mathcal{C}^\infty$, hence the application $\phi\in\mathcal{C}^\infty \mapsto \Phi=g\phi \in\mathcal{C}^\infty$ is a bijection.
\\
Given $L$ a differential operator of order~$m$ and $u\in L_\lloc^1 \cap L_\lloc^p(f(x)g(x)dx)$ which satisfies \eqref{eq:sol} (respectively \eqref{eq:sold}), a density argument shows that the same relation \eqref{eq:sol} (respectively \eqref{eq:sold})  holds for any $\phi \in \mathcal C^m_c$.
Arguing as before one the equivalence of $g$-solutions and weak solutions for $g\in\mathcal{C}^m$ follows.
\end{Rem}
\begin{Rem}
Let~$L$ be a differential operator of order~$m$ with continuous coefficients and let~$f(x)>0$ be continuous. Let $u(x)$ be a \it classical solution \rm to~\eqref{eq:Liou}, that is, $u\in\mathcal{C}^m$ and $Lu(x)=f(x)|u(x)|^p$. Let~$g(x)>0$ be in~$\mathcal{C}^m$. Then~$D^*$ is a differential operator with continuous coefficients, and $u(x)$ is a $g$-solution. Indeed, $u\in L_\lloc^1 \cap L_\lloc^p(f(x)g(x)dx)$ and
\begin{align*}
\int_{\R^N} f(x)\,g(x)\,|u(x)|^p \phi(x)\,dx& = \int_{\R^N} \phi(x)\,g(x)\, Lu(x) \,dx \\
	&= \int_{\R^N} \phi(x)\,Du(x) \,dx = \int_{\R^N} u(x)\,D^* \,\phi(x)dx \,,
\end{align*}
for any~$\phi\in\mathcal{C}_c^\infty$, where the last inequality holds by density argument. \\ By virtue of Remark~\ref{Rem:weak},  $u$ is a \emph{weak} solution,
as yet known being $u$ a classical solution.
\end{Rem}

\begin{Def}\label{Def:F}
Let~$N\geq2$ and $l>0$. We denote by
\[ C_l \doteq \left\{ x \in \R^N : \ |x_i|\leq l, \ i=1,\ldots,N\right\} \,, \]
the $N$-dimensional cube with length~$2l$, centered at the origin. Moreover, for any~$\alpha\in\N^N$, we put
\[ C^{(\alpha)}_l \doteq \left\{ x\in C_l : |x_i| \geq l/2 \quad \text{for any~$i$ such that~$\alpha_i\neq0$} \right\} \,,\]
and we notice that~$C^{(\alpha)}_l\subset C_l\setminus C_{l/2}$, for any~$|\alpha|\geq 1$.
\\
Now we consider
\[ F_i:(0,\infty)\to(0,\infty)\qquad i=1,\dots,N\,, \]
strictly increasing, continuous functions with~$F_i(R)\to\infty$ as~$R\to\infty$. Let
\[
F(R)\doteq \diag (F_1(R),\ldots,F_N(R))
\]
be the diagonal matrix with~$F_i(R)$ as~$(i,i)$-th entry.
\\
For any~$R>1$, we define the $N$-dimensional rectangle
\[ Q_R \doteq [-F_1(R),F_1(R)]\times \ldots \times [-F_N(R),F_N(R)]\,, \]
that is,
\[ Q_R = \left\{ x\in\R^N : \ \max_i |x_i(F_i(R))^{-1}| \leq 1 \right\}\,\]
is the image of the cube~$C_1$ by the matrix~$F(R)$. We denote the volume of~$Q_R$ by $|Q_k|$; it follows
\[ |Q_R| \doteq 2^N\det F(R) = 2^N \,F_1(R)\cdot\ldots\cdot F_N(R)\,. \]
We also put
\[ Q_R^\sharp \doteq [-F_1(R)/2,F_1(R)/2]\times \ldots \times [-F_N(R)/2,F_N(R)/2]\,, \]
that is, the image of the cube~$C_{1/2}$ by the matrix~$F(R)$. It is clear that~$|Q_R^\sharp|=\det F(R)$.
\\
Finally for any~$\alpha\in\N^N$, we put
\[ Q_R^{(\alpha)} \doteq \left\{ x\in Q_R : |x_i| \geq F_i(R)/2 \quad \text{for any~$i$ such that~$\alpha_i\neq0$} \right\} \,,\]
and we notice that~$Q_R^{(\alpha)}\subset Q_R \setminus Q_R^\sharp$, for any~$|\alpha|\geq 1$.
\end{Def}
\begin{Def}
For any $F(R)$ as in Definition~\ref{Def:F}, we denote by~$S_R$ the scaling operator such that for any $f:\R^N\to \R$ the function~$S_R\,f:\R^N\to\R$ is defined by
\[ S_R f (x) \doteq f \bigl( x\cdot (F(R))^{-1} \bigr) \,. \]
\end{Def}

The test function method is based on a suitable choice of compactly supported function which multiply the considered equation. \\
In one dimensional case we call
\it test function \rm any $\Phi\in \mathcal{C}_c^\infty (\R)$ which satisfies
\begin{itemize}
\item $\Phi(\R)\subset [0,1]$;
\item $\supp \Phi\subset[-1,1]$
\item $\Phi\equiv 1$ on $[-1/2,1/2]$.
\end{itemize}
\begin{Rem}
Clearly, there exists an even $\Phi\in \mathcal{C}_c^\infty (\R)$ satisfying the previous assumptions and decreasing in $[1/2,1]$.
\end{Rem}

In order to consider the $N$-dimensional case, one can use a radial reduction and gain spherical supports or one can separate the variables and use cubic supports. Here we prefer this second procedure.
\begin{Def}\label{Def:psi}
Let $\Phi\in\mathcal \mathcal{C}_c^\infty (\R)$ such that $\supp \Phi\subset[-1,1]$, $\Phi(\R)\subset [0,1]$ and $\Phi\equiv 1$ on $[-1/2,1/2]$.
In what follows by \emph{test function} we mean a function $\psi\in\mathcal{C}_c^\infty(\R^N,[0,1])$ having the following structure:
\begin{equation}\label{eq:psiprod}
\psi(x)= \prod_{j=1}^N \Phi(x_j)\,.
\end{equation}
For any $F(R)$ as in Definition~\ref{Def:F}, we put
\[ \psi_R (x) \doteq S_R \psi (x) \equiv \prod_{j=1}^N \Phi\bigl(x_j\,F_j(R)^{-1}\bigr)\,.\]
\end{Def}
\begin{Rem}
If $\psi$ is a test function, then $\supp \psi \subset C_1$ and $\psi\equiv 1$ in~$C_{1/2}$. Therefore $\supp \psi_R \subset Q_R$ and~$\psi_R\equiv1$ in~$Q_R^\sharp$.
\end{Rem}

\begin{Hyp}\label{Hyp:g}
Let~$g(x)>0$ a.e. be such that~$D^*$ is a differential operator with $L^\infty_\lloc$ coefficients, which contains no zero order terms, that is, $D^*(x,0)=0$. We denote by~$a_\alpha(x)$ its coefficients, i.e.
\[ D^* (x,\nabla) \doteq \sum_{1\leq|\alpha|\leq m} a_\alpha(x) \partial_x^\alpha \,. \]
\end{Hyp}
\begin{Rem}
Given $\psi$ as in Definition \ref{Def:psi}, for any $\alpha \in \N^N$  one has~$\supp \partial_x^\alpha \psi \subset C_1^{(\alpha)}$ and, analogously $\supp \partial_x^\alpha \psi_R \subset Q_R^{(\alpha)}$. In particular Hypothesis~\ref{Hyp:g} implies
\begin{equation}\label{eq:hole}
\supp D^*(\psi^\sigma) \subset C_1\setminus C_{1/2}\,, \qquad \supp D^*(\psi_R) \subset Q_R\setminus Q_R^\sharp\,,
\end{equation}
for any test function $\psi$ and any $\sigma\in \N^*$.
\end{Rem}
\begin{Rem}
It is clear that we also have the following scaling property:
\begin{equation}\label{eq:scaling}
\partial_x^\alpha \psi_R \equiv \partial_x^\alpha (S_R \psi) = \Bigl(\prod_{i=1}^N (F_i(R))^{-\alpha_i} \Bigr) \, S_R (\partial_x^\alpha \psi) \,.
\end{equation}
\end{Rem}

\begin{Lem}\label{lem.power2}
Let $\varphi$ be a test function. For any fixed $r>1$ there exists $\sigma\in \N$ such that
%
\[ |\partial_x^\alpha (\varphi^\sigma)|^r\lesssim \varphi^{\sigma} \,, \quad \text{ for any } \sigma\in \N, \ \sigma \geq |\alpha|\, r'\,.\]
%
\end{Lem}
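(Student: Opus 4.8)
The plan is to reduce the statement to a pointwise estimate for a single variable and then combine the one-dimensional estimates via the product structure \eqref{eq:psiprod}. First I would write $\varphi(x)=\prod_{j=1}^N\Phi(x_j)$, so that $\varphi^\sigma=\prod_j\Phi(x_j)^\sigma$, and by the Leibniz rule $\partial_x^\alpha(\varphi^\sigma)=\prod_{j=1}^N\partial_{x_j}^{\alpha_j}(\Phi(x_j)^\sigma)$. Thus it suffices to prove the one-variable claim: for $\Phi\in\mathcal C_c^\infty(\R,[0,1])$ a test function and any $k\in\N$, one has $|\partial_t^k(\Phi^\sigma)|^r\lesssim\Phi^\sigma$ whenever $\sigma\geq k\,r'$, with the implied constant depending on $\Phi,k,r,\sigma$ but not on $t$. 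The $N$-dimensional estimate then follows by taking $\sigma\geq|\alpha|\,r'\geq\alpha_j r'$ for each $j$ and multiplying the $N$ one-variable bounds together.

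For the one-variable estimate, the key computation is that $\partial_t^k(\Phi^\sigma)$ is, by the higher-order chain rule (Faà di Bruno), a finite sum of terms of the form $c\,\sigma(\sigma-1)\cdots(\sigma-\ell+1)\,\Phi^{\sigma-\ell}\prod_{i}(\partial_t^{m_i}\Phi)$ where $1\leq\ell\leq k$ and $\sum_i m_i=k$ with each $m_i\geq1$; in particular each such term has a factor $\Phi^{\sigma-\ell}$ with $\ell\leq k$ and the remaining derivative factors are bounded by $\|\Phi\|_{\mathcal C^k}^{\ell}$, uniformly in $t$. Hence $|\partial_t^k(\Phi^\sigma)|\lesssim \Phi^{\sigma-k}$ (absorbing the lower powers $\Phi^{\sigma-\ell}$ into $\Phi^{\sigma-k}$ since $\Phi\leq1$ and $\ell\leq k$, wherever $\Phi\neq0$; the estimate is trivial where $\Phi=0$ because then all derivatives of $\Phi^\sigma$ vanish too, as $\sigma\geq k+1$). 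Raising to the power $r$ gives $|\partial_t^k(\Phi^\sigma)|^r\lesssim\Phi^{(\sigma-k)r}$, and it remains to check $(\sigma-k)r\geq\sigma$, i.e. $\sigma(r-1)\geq kr$, i.e. $\sigma\geq k r/(r-1)=k r'$; this is precisely the hypothesis. Since $0\leq\Phi\leq1$, the larger power $\Phi^{(\sigma-k)r}$ is $\leq\Phi^\sigma$, which closes the argument.

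The only mildly delicate point — and the one I would treat carefully — is the behaviour near the boundary of $\{\Phi=0\}$, where one must be sure that no negative power of $\Phi$ is secretly being invoked. This is handled by the observation above: every term produced by differentiating $\Phi^\sigma$ a total of $k$ times retains a factor $\Phi^{\sigma-\ell}$ with $\sigma-\ell\geq\sigma-k\geq k(r'-1)\geq0$, so all powers of $\Phi$ appearing are nonnegative, and the bound $|\partial_t^k(\Phi^\sigma)|\lesssim\Phi^{\sigma-k}$ holds as a genuine pointwise inequality on all of $\R$. No other obstacle arises; the $r>1$ hypothesis is used exactly to guarantee $r'<\infty$ so that a finite $\sigma$ works. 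One may, if desired, record that any $\sigma\geq\lceil |\alpha| r'\rceil$ suffices, which is consistent with the statement.
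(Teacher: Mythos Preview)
Your argument is correct. The paper itself gives only a one-line hint (``the proof may be given by inductive argument on~$|\alpha|$''), whereas you carry out a direct computation: you exploit the product structure~$\varphi^\sigma=\prod_j\Phi(x_j)^\sigma$ to reduce to a single variable, and then apply Fa\`a di Bruno to obtain $|\partial_t^k(\Phi^\sigma)|\lesssim\Phi^{\sigma-k}$. Both routes rest on the same mechanism---each differentiation removes at most one factor of~$\Phi$, so after~$k$ derivatives a factor~$\Phi^{\sigma-k}$ survives---and the final arithmetic $(\sigma-k)r\geq\sigma\iff\sigma\geq kr'$ is identical. Your direct approach has the advantage of making the role of the product form~\eqref{eq:psiprod} and the condition~$\sigma\geq|\alpha|r'$ completely transparent, while the inductive approach would package the same computation more tersely; neither buys anything the other does not.
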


The proof may be given by inductive argument on $|\alpha| \in \N$.

\smallskip
Now we introduce two quantities related respectively to the linear operator and to the semilinear perturbation.

\begin{Def}\label{Def:HG}
Let~$g(x)$ and $a_\alpha(x)$ be as in Hypothesis \ref{Hyp:g}. For any $\alpha \in \N$ with $1\le |\alpha|\le m$, we put
\begin{align}
\label{eq:HalphaR}
H_\alpha(R)
    & \doteq \prod_{i=1}^N F_i(R)^{-\alpha_i} \,, \\
\label{eq:GalphaR}
G_\alpha(R)
    & \doteq \int_{Q_R^{(\alpha)}} |a_\alpha(x)|^{p'}\,(g(x)\, f(x))^{-(p'-1)}\,dx 
\end{align}
We remark that $G_\alpha(R)$ can be a divergent integral of positive functions, that is, $G_\alpha(R)\in[0,\infty]$.\\
We observe that~$\lim_{R\to\infty} H_\alpha(R)=0$, since~$F_i(R)\to\infty$ for any~$i=1,\ldots,N$.
\end{Def}

Combining the growth behaviour of $H_\alpha(R)$ and $G_\alpha(R)$ we can state a first nonexistence theorem.
\begin{Thm}\label{Thm:main}
Let~$L(x,\nabla)$ be a differential operator, $f(x)>0$ a.e. and $p>1$.
Assume that there exists $g>0$ \it a.e. such that $(L,g)$ satisfies Hypothesis~\ref{Hyp:g}.\\
Suppose that~$u\in L_\lloc^p(f(x)g(x)dx)$ is a global $g$-solution in the sense of Definition~\ref{Def:sol}.\\
If
there exists~$F(R)$ as in Definition~\ref{Def:F}, such that
\begin{equation}\label{eq:GH}
\limsup_{R\to\infty} H_\alpha(R)\,G_\alpha(R)^{\frac1{p'}} <\infty \,\quad \text{ for any } 1\leq|\alpha|\leq m\,,
\end{equation}
then~$u(x)=0$ a.e.
\end{Thm}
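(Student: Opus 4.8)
The plan is to run the classical Mitidieri--Pohozaev test function argument, but on the *modified* (multiplied) equation encoded in Definition~\ref{Def:sol}. First I would fix a test function $\psi$ as in Definition~\ref{Def:psi} and a power $\sigma\in\N$ large enough (to be pinned down by Lemma~\ref{lem.power2}), and plug $\phi=\psi_R^\sigma$ into the $g$-solution identity~\eqref{eq:sol}. This gives
\[
\int_{\R^N} f(x)\,g(x)\,|u(x)|^p \psi_R^\sigma(x)\,dx = \int_{\R^N} u(x)\, D^*(x,\nabla)\bigl(\psi_R^\sigma(x)\bigr)\,dx\,.
\]
Since $D^*$ has no zero-order term (Hypothesis~\ref{Hyp:g}), $D^*(\psi_R^\sigma)$ is supported in the annular region $Q_R\setminus Q_R^\sharp$ by~\eqref{eq:hole}, and by the Leibniz rule and the scaling identity~\eqref{eq:scaling} each term is a sum of pieces of the form $a_\alpha(x)\,H_\alpha(R)\,S_R(\partial_x^\alpha(\psi^\sigma))$ (up to lower-order factors $\psi^{\sigma-1}$ etc.), each supported in $Q_R^{(\alpha)}$.

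Next I would estimate the right-hand side. On each term I insert the weight $(fg)^{1/p}\psi_R^{\sigma/p}$ and its reciprocal and apply Hölder's inequality with exponents $p$ and $p'$:
\[
\left| \int u\, a_\alpha\, H_\alpha(R)\, S_R(\partial_x^\alpha \psi^\sigma)\right|
\le H_\alpha(R)\left(\int_{Q_R^{(\alpha)}} f g\,|u|^p \psi_R^\sigma\right)^{\!1/p}\!\left(\int_{Q_R^{(\alpha)}} |a_\alpha|^{p'} (fg)^{-(p'-1)}\,|S_R(\partial_x^\alpha\psi^\sigma)|^{p'}\psi_R^{-\sigma(p'-1)}\right)^{\!1/p'}\!.
\]
Here I use Lemma~\ref{lem.power2} with $r=p'$: choosing $\sigma\ge |\alpha|\,p$ ensures $|\partial_x^\alpha(\psi^\sigma)|^{p'}\lesssim \psi^\sigma$, hence $|S_R(\partial_x^\alpha\psi^\sigma)|^{p'}\psi_R^{-\sigma(p'-1)}=|S_R(\partial_x^\alpha\psi^\sigma)|^{p'}(\psi_R^\sigma)^{-(p'-1)}\lesssim 1$ on the support. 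The remaining $x$-integral is then dominated by $G_\alpha(R)$ from~\eqref{eq:GalphaR}. Writing $I_R\doteq \int fg\,|u|^p\psi_R^\sigma\,dx$ and summing over $1\le|\alpha|\le m$, I get
\[
I_R \;\lesssim\; \sum_{1\le|\alpha|\le m} H_\alpha(R)\,G_\alpha(R)^{1/p'}\, I_R^{1/p}\,,
\]
and by hypothesis~\eqref{eq:GH} the coefficients are bounded in $R$, so $I_R\lesssim I_R^{1/p}$ with a constant independent of $R$; since $1/p<1$ this forces $I_R\le C$ uniformly in $R$.

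Finally I would let $R\to\infty$. Monotone convergence (using $\psi_R^\sigma\uparrow 1$, $\psi_R^\sigma\ge 0$, and that $Q_R$ exhausts $\R^N$ as $F_i(R)\to\infty$) gives $\int_{\R^N} fg\,|u|^p\,dx = \lim_R I_R \le C<\infty$, so in particular $fg\,|u|^p\in L^1$. Then I revisit the Hölder estimate: on the annulus $Q_R\setminus Q_R^\sharp$ the factor $\bigl(\int_{Q_R^{(\alpha)}} fg\,|u|^p\psi_R^\sigma\bigr)^{1/p}\to 0$ as $R\to\infty$ by dominated convergence (the integrand is bounded by the $L^1$ function $fg|u|^p$ and its support escapes to infinity), while $H_\alpha(R)G_\alpha(R)^{1/p'}$ stays bounded; hence the right-hand side of~\eqref{eq:sol} tends to $0$. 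Combined with $I_R\to \int fg|u|^p$, this yields $\int_{\R^N} f(x)g(x)|u(x)|^p\,dx=0$, and since $f,g>0$ a.e. we conclude $u=0$ a.e.

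The main obstacle I anticipate is the bookkeeping in the Leibniz expansion of $D^*(\psi_R^\sigma)$: one must check that every resulting term, after applying~\eqref{eq:scaling}, carries exactly the factor $H_\alpha(R)$ and is supported in $Q_R^{(\alpha)}$, and that the leftover derivative-of-$\psi$ factors combine with $\psi_R^{-\sigma(p'-1)}$ to something bounded — which is precisely what the choice $\sigma\ge |\alpha| p$ in Lemma~\ref{lem.power2} is designed to guarantee. A secondary subtlety is making the vanishing of the annular integral rigorous, i.e. justifying that $G_\alpha(R)^{1/p'}$ need not be controlled on its own (it may diverge) as long as the product with the small factor $H_\alpha(R)$ and the vanishing $L^p$-mass over the escaping annulus is handled together; one should be slightly careful to re-derive the estimate restricted to $Q_R\setminus Q_R^\sharp$ rather than reuse the uniform bound verbatim.
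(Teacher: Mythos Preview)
Your plan is exactly the paper's proof: insert a power of the scaled test function into~\eqref{eq:sol}, use H\"older with weight $(fg\,\psi_R^\sigma)^{1/p}$, control the quotient $|\partial^\alpha(\psi^\sigma)|^{p'}/(\psi^\sigma)^{p'-1}$ via Lemma~\ref{lem.power2}, obtain $I_R\lesssim (I_R^\sharp)^{1/p}$ with $I_R^\sharp$ the annular piece, hence $I_R$ uniformly bounded, then pass to the limit by monotone and dominated convergence to conclude $I=0$. One small slip: applying Lemma~\ref{lem.power2} with $r=p'$ gives $|\partial^\alpha(\psi^\sigma)|^{p'}\lesssim\psi^\sigma$, but dividing by $(\psi^\sigma)^{p'-1}$ leaves $(\psi^\sigma)^{2-p'}$, which blows up when $1<p<2$; the correct move (as in the paper) is to take $r=p$, i.e.\ $\sigma\ge m p'$, so that $|\partial^\alpha(\psi^\sigma)|^{p'}\lesssim(\psi^\sigma)^{p'-1}$ directly.
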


Thanks to Remark~\ref{Rem:weak}, from Theorem~\ref{Thm:main} we immediately obtain the following.

\begin{Cor}\label{Cor:weak}
Let~$L(x,\nabla)$ be a differential operator, $f(x)>0$ a.e. and $p>1$.
Let~$u\in L_\lloc^p(f(x)dx)$ be a global weak solution in the sense of~\eqref{eq:sold}.\\
Assume that there exists $g\in \mathcal{C}^m$ such that $(L, g)$ satisfy Hypotheses~\ref{Hyp:g} and
there exists~$F(R)$ as in Definition~\ref{Def:F}, such that \eqref{eq:GH} holds.\\ Then~$u(x)=0$ a.e.
\end{Cor}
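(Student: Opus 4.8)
The plan is to deduce Corollary~\ref{Cor:weak} directly from Theorem~\ref{Thm:main}, the only work being to reinterpret the weak solution as a $g$-solution so that the hypotheses of that theorem are literally met. First I would record the regularity transfers: since $g\in\mathcal{C}^m$, the operator $D^{*}$ (the formal adjoint of $gL$) has $\mathcal{C}^{0}$, in particular $L^{\infty}_{\lloc}$, coefficients, so that part of Hypothesis~\ref{Hyp:g} is consistent with what is assumed; and since $g>0$ is continuous, hence locally bounded, $u\in L^{p}_{\lloc}(f(x)\,dx)$ forces $u\in L^{p}_{\lloc}(f(x)g(x)\,dx)$ and $|u|^{p}f\in L^{1}_{\lloc}$ forces $|u|^{p}fg\in L^{1}_{\lloc}$. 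Thus both integrals in the $g$-solution identity \eqref{eq:sol} are well defined, as in Remark~\ref{Rem:integrability}.

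Next I would invoke the equivalence of Remark~\ref{Rem:weak}: because $1/g\in\mathcal{C}^m$, the map $\phi\mapsto\Phi=g\phi$ is a bijection of $\mathcal{C}^{m}_{c}$ onto itself, and the algebraic identity $D^{*}\phi=L^{*}(g\phi)$ converts the weak identity \eqref{eq:sold} evaluated at $\Phi=g\phi$ into the $g$-solution identity \eqref{eq:sol} evaluated at $\phi$. Hence $u$ is a global $g$-solution in the sense of Definition~\ref{Def:sol}. If one wishes to be scrupulous, the density argument mentioned in Remark~\ref{Rem:weak} is what licenses using \eqref{eq:sold} with $\Phi\in\mathcal{C}^{m}_{c}$ rather than only $\Phi\in\mathcal{C}^{\infty}_{c}$; but the scaled test functions $\psi_{R}$ that enter the proof of Theorem~\ref{Thm:main} are themselves smooth, so even this refinement is not strictly needed.

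Finally, the quantities $H_{\alpha}(R)$ and $G_{\alpha}(R)$ of \eqref{eq:HalphaR}--\eqref{eq:GalphaR} and the growth bound \eqref{eq:GH} are in force by assumption, so Theorem~\ref{Thm:main} applies verbatim to the $g$-solution $u$ and yields $u(x)=0$ a.e., which is the claim.

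I do not expect any real obstacle here: the only step requiring argument is the passage from weak solution to $g$-solution, and that is exactly the content of Remark~\ref{Rem:weak}, elementary once $g\in\mathcal{C}^{m}$. The ``hard part,'' such as it is, is the bookkeeping of the regularity of $g$ and $1/g$ together with the local-integrability transfers; all the quantitative content has already been absorbed into Theorem~\ref{Thm:main}.
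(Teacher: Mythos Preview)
Your proposal is correct and follows exactly the paper's own route: the corollary is stated immediately after Theorem~\ref{Thm:main} with the one-line justification ``Thanks to Remark~\ref{Rem:weak}, from Theorem~\ref{Thm:main} we immediately obtain the following.'' Your write-up simply unpacks that sentence, making explicit the regularity and local-integrability transfers implicit in the remark.
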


\begin{Rem}
Condition \eqref{eq:GH} implies that $H_\alpha(R)\,G_\alpha(R)^{\frac1{p'}}$ are bounded and $G_\alpha(R)$ is finite for sufficiently large $R$.
\end{Rem}

\begin{proof}[Proof of Theorem~\ref{Thm:main}]
Let ($L$, $p$, $f$) and ($g$, $F(R)$) be such that the assumptions of Theorem~\ref{Thm:main} hold. \\
Let~$u\in L_\lloc^p(f(x)g(x)dx)$ be a $g$-solution of $Lu=f|u|^p$, that is, \eqref{eq:sol} is satisfied.
Given $R>1$, and $\psi$ a test function, we put
\begin{align*}
I_R
    & \doteq \int_{\R^N} g(x)f(x)|u(x)|^p\psi_R(x)\,dx \equiv \int_{Q_R} g(x)f(x)|u(x)|^p\psi_R(x)\,dx\,, \\
I_R^\sharp
		& \doteq \int_{Q_R\setminus Q_R^\sharp} g(x)f(x)|u(x)|^p\psi_R(x)\,dx\,.
\end{align*}
By using~\eqref{eq:sol} and~\eqref{eq:hole} together with H\"older inequality, since $p'/p=p'-1$, we formally obtain:
\begin{align}
\nonumber
I_R & = \int_{Q_R\setminus Q_R^\sharp} u D^* \psi_R(x)\,dx \leq \int_{Q_R\setminus Q_R^\sharp} |u|\, |D^* \psi_R(x)| \,dx \\
\label{eq:Holder}
	& \lesssim (I_R^\sharp)^{\frac1p}
\left(\int_{Q_R\setminus Q_R^\sharp}
\frac{|D^* \psi_R(x)|^{p'}}{\psi_R^{p'-1}(x)} \, (g(x)f(x))^{-(p'-1)} \,dx
 \right)^{\frac1{p'}}\,.
\end{align}
By using~\eqref{eq:scaling}, we may now estimate
\[ |D^* \psi_R(x)| \leq \sum_{1\leq|\alpha|\leq m} |a_\alpha(x)|\,|\partial_x^\alpha\psi_R(x)| \leq \sum_{1\leq|\alpha|\leq m} H_\alpha(R)\,|a_\alpha(x)|\,|S_R\partial_x^\alpha\psi(x)| \,. \]
In order to control the quantities
\begin{equation}\label{eq:depsipsi}
\frac{|S_R\partial_x^\alpha\psi(x)|^{p'}}{(\psi_R(x))^{p'-1}} = S_R \left( \frac{|\partial_x^\alpha\psi|^{p'}}{\psi^{p'-1}}\right)(x)\,,
\end{equation}
we choose a particular $\psi$. More precisely we take $\psi=\varphi^\sigma$ a suitable integer power of a test function $\varphi$ with $\sigma \in \N$ and $\sigma\ge mp'$. By virtue of Lemma~\ref{lem.power2}, the quantities in~\eqref{eq:depsipsi} are bounded. In particular, the quantity~$|D^* \psi_R(x)|^{p'}\psi_R^{-(p'-1)}(x)$ in~\eqref{eq:Holder} is well-defined.
\\
Recalling that $\supp \partial_x^\alpha\psi_R(x) \subset Q_R^{(\alpha)}$, we get
\begin{equation}\label{eq.IR}\begin{split}
I_R & \lesssim (I_R^\sharp)^{\frac1p} \sum_{1\leq|\alpha|\leq m} H_\alpha (R)\,\left( \int_{Q_R^{(\alpha)}} |a_\alpha(x)|^{p'}\,(g(x)\, f(x))^{-(p'-1)}\,dx \right)^{\frac1{p'}} \\
    & \approx (I_R^\sharp)^{\frac1p} \sum_{1\leq|\alpha|\leq m} H_\alpha(R) \,(G_\alpha(R))^{\frac1{p'}} \leq C\, (I_R^\sharp)^{\frac1p} \,,
\end{split}
\end{equation}
where~$C>0$ does not depend on~$R$, since~$H_\alpha(R) \,(G_\alpha(R))^{\frac1{p'}}$ are bounded.

We immediately get that~$I_R\leq C_p\doteq C^{\frac{p}{p-1}}$, being $I_R^{1-\frac1p}\leq C$. Indeed, $I_R^\sharp\leq I_R$ and~$p>1$. \\
Now we apply Beppo-Levi convergence theorem. Let~$\{R_k\}_{k\in\N}$ be a strictly increasing sequence with~$R_k\to\infty$ as~$k\to\infty$ and so that~$F_i(R_{k+1})\ge 2 F_i(R_k)$ for any~$i=1,\ldots,N$. Then it holds~$\psi_{R_{k+1}}(x)\geq\psi_{R_k}(x)$ for any~$x\in\R^N$ and $\psi_{R_k}(x)\to 1$ pointwise as~$k\to\infty$. 
We constructed an increasing sequence of positive functions $\{ g\,f\,|u|^p\,\psi_{R_k} \}_{k\in\N}$ to which we can apply Beppo-Levi convergence theorem:
\begin{equation}\label{eq:BL}
I \doteq \int_{\R^N} g(x)f(x)|u(x)|^p\,dx  = \lim_{k\to\infty} I_{R_k} \,.
\end{equation}
This gives~$I\leq C_p$, hence~$u\in L^p(f(x)g(x)dx)$. Now we can apply Lebesgue convergence theorem to gain
\[ \lim_{R\to\infty} I_R^\sharp \leq \lim_{R\to\infty} \int_{Q_R\setminus Q_R^\sharp} g(x)f(x)|u(x)|^p\,dx =0 \,. \]
This concludes the proof, since~$I_R\leq C\, (I_R^\sharp)^{\frac1p} \to0$ as~$R\to\infty$ so that $I=0$ thanks to~\eqref{eq:BL}, hence $u=0$ a.e. since~$g(x)f(x)>0$ a.e.
\end{proof}

\begin{Rem}
Theorem~\ref{Thm:main} cannot be extended  if $D^*$ has zero order term, namely $a_0(x)\neq0$.
Indeed with the same notation of Theorem~\ref{Thm:main}, we would have $Q_R^{(0)}=Q_R$, so that $I_R^\sharp=I_R$ and \eqref{eq.IR} gives
$I_R\lesssim I_R^{\frac1p}$. The final convergence argument does not work except for the case $H_0(R)G_0(R)\to 0$ for $R\to \infty$.
On the other hand,
$H_0(R)=1$,  and
\[ \lim_{R\to\infty} G_0(R) \equiv \lim_{R\to\infty} \int_{Q_R} |a_0(x)|^{p'}\,(g(x)\, f(x))^{-(p'-1)}\,dx  = \int_{\R^N} |a_0(x)|^{p'}\,(g(x)\, f(x))^{-(p'-1)}\,dx\,. \]
So that  $H_0(R)G_0(R)\to 0$ if and only if $a_0(x)=0$ a.e.\\
This means that $g$ has to be taken, when it exists, such that the corresponding $D^*$ does not contain the zero order term.
\end{Rem}

%

\subsection{The Cauchy Problem}\label{sec:CP}

Let us denote~$\R_+^{n+1}\doteq [0,\infty)\times\R^n$. Here we consider the Cauchy problem:
\begin{equation}\label{eq:CP} \begin{cases}
Lu = f(t,x)|u|^p\,, \qquad t\geq0, \ x\in\R^n \\
\partial_t^ju(0,x)=u_j(x)\,, \quad j=0,\ldots,m-1
\end{cases}
\end{equation}
where $u_j\in L^1_{\lloc}$ for any $j=0,\dots,m-1$ and
\begin{equation}\label{eq:LCauchy}
L = \partial_t^m +\sum_{j=0}^{m-1} L_j(t,x,\nabla_x) \partial_t^j \,,
\end{equation}
with $L_j$ differential operators of order~$m_j\in\N$. We also put~ $L_m=1$.
\\
Let~$g(t,x)>0$ a.e. and let us define $D=gL$ and $D_j=gL_j$ for $j=0,\dots,m-1$. Formally
\begin{equation}\label{eq:DCauchy}
D^* v (t,x) \doteq \sum_{j=0}^m  (-1)^j \partial_t^j \left( D_j^*(t,x,\nabla_x)\,v (t,x) \right) \,,  \quad \text{for any} \quad v\in \mathcal{C}^\infty ([0,\infty)\times\R^n)\,.
\end{equation}
Here $D_m=D_m^*=g$, and $D_j^*=(gL_j)^*$ according to notation \eqref{eq.adjoint}.
\\
For any~$j=0,\ldots,m$, we assume that $D_j^*(t,x,\nabla)$ is a differential operator with~$\mathcal{C}^j([0,\infty),L^\infty_\lloc)$ coefficients.
In particular we suppose $g\in\mathcal{C}^m([0,\infty),L^\infty_\lloc)$.
\\
Let~$\eta\in\mathcal{C}_c^\infty([0,\infty))$ be such that~$\eta\equiv1$ in some neighborhood of~$\{t=0\}$. Integrating by parts, we obtain
\begin{align*}
\int_0^\infty \int_{\R^n} \eta(t)\phi(x)\,D\,u(t,x)\,dx\,dt
    & = \int_0^\infty \int_{\R^n} u(t,x)\,D^*\,\bigl(\eta(t)\phi(x)\bigr)\,dx\,dt \\
    & \quad - \sum_{j=1}^m \sum_{k=1}^j (-1)^{j-k} \int_{\R^n}[\partial_t^{k-1} u(0,x)] \, (\partial_t^{j-k} D_j^*)(0,x,\nabla_x) \phi(x)\,dx\,,
\end{align*}
for any~$u\in \mathcal{C}^\infty([0,\infty)\times \R^n)$ and~$\phi\in\mathcal{C}_c^\infty(\R^n)$.
\begin{Def}\label{Def:solCP}
We say that~$u\in L^1_\lloc$ is a $g$-solution to~\eqref{eq:CP} if $|u|^pfg\in L^1_\lloc$ and for any~$\eta\in\mathcal{C}_c^\infty([0,\infty))$ such that~$\eta\equiv1$ in some neighborhood of~$\{t=0\}$ and for any~$\phi\in\mathcal{C}_c^\infty(\R^n)$ it holds
\[
\int_0^\infty \int_{\R^n} g(t,x)\,f(t,x)\,|u(t,x)|^p\,\eta(t)\phi(x)\,dx\,dt =
\int_0^\infty \int_{\R^n} u(t,x)\,D^*\,\bigl(\eta(t)\phi(x)\bigr)\,dx\,dt - K_0(\phi)\,,
\]
where we put
\begin{equation}\label{eq:Ku}
K_0(\phi) \doteq \sum_{j=1}^m \sum_{k=1}^j (-1)^{j-k} \int_{\R^n} u_{k-1}(x) \,( \partial_t^{j-k} D_j^*)(0,x,\nabla_x) \phi(x)\,dx \,,
\end{equation}
with~$u_{k-1}(x)$ as in~\eqref{eq:CP}.
\end{Def}
We remark that we are not assuming that $L$ is a Kovalevskian operator in  normal form. Indeed
$L$ is a differential operator of order $M:=\max \{ m, j+m_j\,|\, j=0,\dots, m-1\}$, possibly $M>m$.
In order to state our result, analogously to Hypothesis~\ref{Hyp:g}, we assume that $D^*$ may be written as
\begin{equation}\label{eq:DCP}
D^* (t,x,\nabla) \doteq \sum_{1\leq\alpha_0+|\alpha|\leq M} a_{\tilde \alpha}(t,x) \partial_t^{\alpha_0}\partial_x^\alpha\,.
\end{equation}
Let us extend the notation of Section~\ref{sec:Liou} for~$N=n+1$, denoting~$x_0=t$ when it is convenient and $\widetilde{x}=(x_0,x)\in\R_+^{n+1}$ with~$x_0\geq0$ and~$x\in\R^n$. We put
\[ \widetilde{C}_l \doteq \left\{ \widetilde{x} \in \R_+^{n+1} :\ x_0\leq l, \ |x_i|\leq l \right\} \equiv [0,l] \times C_l \,, \]
and similarly
\begin{gather*}
\widetilde{F}(R)\doteq \bigl(F_0(R),F(R)\bigr) \equiv \bigl(F_0(R),F_1(R),\ldots,F_n(R)\bigr)\,, \\
\widetilde{Q}_R \doteq [0,F_0(R)] \times Q_R\,, \qquad \widetilde{Q}_R^\sharp \doteq [0,F_0(R)/2] \times Q_R^\sharp \,.
\end{gather*}
We remark that~$\det \widetilde{F}(R)= F_0(R)\,\det F(R)$. We will consider test functions with separable variables, that is,
\begin{equation}\label{eq:wtpsi}
\widetilde{\psi}(\widetilde{x}) \equiv \widetilde{\psi}(t,x)=\eta(t)\psi(x) \equiv \eta(t)\,\prod_{j=1}^n\phi(x_j) \,,
\end{equation}
with $\phi$ as in Definition \ref{Def:psi} and $\eta$ satisfying
\begin{itemize}
\item $\eta\in\mathcal C_c^\infty ([0,+\infty))$;
\item $\eta([0,+\infty))\subset [0,1]$;
\item $\supp \eta\subset[0,1]$
\item $\eta\equiv 1$ on $[0,1/2]$.
\end{itemize}
This choice is consistent with our definition of~$\widetilde{C}_l$, in particular $\supp \widetilde{\psi} \subset \widetilde{C}_1$ and $\widetilde{\psi}(t,x)=1$ for any~$(t,x)\in \widetilde{C}_{1/2}$.\\
For any~$\widetilde{\alpha}=(\alpha_0,\alpha)\in\N^{n+1}$, 
we define
\begin{align*}
\widetilde{Q}_R^{(\widetilde{\alpha})}
    & =\widetilde{Q}_R^{(\alpha_0,\alpha)}
    = \left\{ (x_0,x)\in \widetilde{Q}_R : \ |x_i|\geq F_i(R)/2\,, \ \text{for any~$i=0,\dots,m$ such that $\alpha_i\neq0$} \right\}\,, \\
H_{\widetilde{\alpha}}(R)
    &=H_{(\alpha_0,\alpha)}(R)
    = \prod_{i=0}^n (F_i(R))^{-\alpha_i}\,,
\intertext{and for any~$a_{\widetilde{\alpha}}$ as in~\eqref{eq:DCP} we put}
G_{\widetilde{\alpha}}(R)
    &=G_{(\alpha_0,\alpha)}(R)
     = \int_{\widetilde{Q}_R^{(\alpha_0,\alpha)}} \frac{|a_{(\alpha_0,\alpha)}(t,x)|^{p'}}{(g(t,x)\, f(t,x))^{p'-1}}\,dt\,dx \,.
\end{align*}

\begin{Thm}\label{Thm:mainCP}
Let~$L(t,x,\partial_t,\nabla_x)$ be a differential operator, $f(t,x)>0$ a.e. and $p>1$. Assume that there exists~$g(t,x)>0$ a.e. such that
$D^*$ does not contain a term of zero order. Let us assume that there exists~$\widetilde{F}(R)$ such that
\begin{equation}\label{eq:GHCP}
\limsup_{R\to\infty} H_{\tilde \alpha}(R)\,G_{\tilde\alpha}(R)^{\frac1{p'}} <\infty \,\quad \text{ for any } 1\leq|\tilde \alpha|\leq M\,.
\end{equation}
Let $(u_0,u_1,\dots, u_{m-1})$ be sufficiently smooth so that $U_0\in L^1$ where
\begin{equation}\label{eq:data}
U_0(x) \doteq \sum_{j=1}^m \sum_{k=1}^j (-1)^{j-k}(\partial_t^{j-k} D_j)(0,x,\nabla_x) u_{k-1}(x)\,.
\end{equation}
If
\begin{equation}\label{eq:DATAcond}
\int_{\R^n} U_0(x)\,dx >0\,
\end{equation}
then there exists no global $g$-solution~$u(t,x)$ to~\eqref{eq:CP} in the sense of Definition~\ref{Def:solCP}.
\end{Thm}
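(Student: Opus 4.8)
The plan is to mimic the proof of Theorem~\ref{Thm:main} for the Liouville problem, now carrying along the extra boundary term produced by the initial data. First I would fix a test function $\widetilde\psi=\eta\,\psi$ of the separable form \eqref{eq:wtpsi}, replace $\psi$ by a high integer power $\varphi^\sigma$ of a test function $\varphi$ with $\sigma\geq Mp'$ (so that Lemma~\ref{lem.power2} applies to all derivatives $\partial_t^{\alpha_0}\partial_x^\alpha$ of order $|\widetilde\alpha|\leq M$), and set $\widetilde\psi_R=S_R\widetilde\psi$. Plugging $\widetilde\psi_R$ into the definition of $g$-solution (Definition~\ref{Def:solCP}) yields
\[
\int_{\R_+^{n+1}} g f |u|^p \widetilde\psi_R \,dt\,dx = \int_{\R_+^{n+1}} u\, D^*\widetilde\psi_R\,dt\,dx - K_0(\psi_R),
\]
where, because $\eta\equiv1$ near $t=0$ and $\psi_R\equiv1$ on $Q_R^\sharp$, the boundary term collapses to $K_0(\psi_R)=-\int_{\R^n}U_0(x)\,\psi_R(x)\,dx$ for $R$ large enough (here one uses that $U_0\in L^1$ together with dominated convergence so that $\int U_0\psi_R\to\int U_0>0$, whence for large $R$ this quantity is bounded below by a positive constant $c_0>0$). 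Thus, denoting $I_R=\int g f|u|^p\widetilde\psi_R$ and $I_R^\sharp=\int_{\widetilde Q_R\setminus\widetilde Q_R^\sharp} g f|u|^p\widetilde\psi_R$,
\[
I_R + c_0 \leq I_R + \int_{\R^n} U_0\,\psi_R\,dx = \int u\,D^*\widetilde\psi_R \leq \int_{\widetilde Q_R\setminus\widetilde Q_R^\sharp} |u|\,|D^*\widetilde\psi_R|.
\]

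Next I would estimate the right-hand side exactly as in \eqref{eq:Holder}--\eqref{eq.IR}: the key point is that since $D^*$ has no zero-order term, $\supp D^*\widetilde\psi_R\subset \widetilde Q_R\setminus\widetilde Q_R^\sharp$, and applying Hölder with exponents $p,p'$ together with the scaling identity \eqref{eq:scaling}, Lemma~\ref{lem.power2}, and the definitions of $H_{\widetilde\alpha}(R)$ and $G_{\widetilde\alpha}(R)$ gives
\[
I_R + c_0 \lesssim (I_R^\sharp)^{1/p}\sum_{1\leq|\widetilde\alpha|\leq M} H_{\widetilde\alpha}(R)\,G_{\widetilde\alpha}(R)^{1/p'}\leq C\,(I_R^\sharp)^{1/p},
\]
with $C$ independent of $R$ by \eqref{eq:GHCP}. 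Since $I_R^\sharp\leq I_R$ and $c_0>0$, this already forces $I_R\leq C_p$ uniformly in $R$ (the inequality $I_R+c_0\leq C I_R^{1/p}$ with $p>1$ bounds $I_R$). Then Beppo--Levi along a sequence $R_k\to\infty$ with $F_i(R_{k+1})\geq 2F_i(R_k)$ gives $\int_{\R_+^{n+1}} g f|u|^p<\infty$, so $gf|u|^p\in L^1(\R_+^{n+1})$; consequently $I_R^\sharp\to0$ by dominated convergence (the integrand is dominated by $gf|u|^p\in L^1$ and the domain $\widetilde Q_R\setminus\widetilde Q_R^\sharp$ escapes to infinity). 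But then the displayed inequality gives $c_0\leq \lim_{R\to\infty}(I_R+c_0)\leq C\lim_{R\to\infty}(I_R^\sharp)^{1/p}=0$, a contradiction with $c_0>0$. Hence no global $g$-solution exists.

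I expect the main obstacle to be the bookkeeping of the boundary term: one must verify carefully that with the separable test function $\widetilde\psi_R=\eta(t/F_0(R))\,\psi_R(x)$ the full expression $K_0(\psi_R)$ coming from \eqref{eq:Ku} reduces, for $R$ large, exactly to $-\int_{\R^n}U_0(x)\psi_R(x)\,dx$ — this uses $\eta\equiv1$ near $t=0$ so that all the $t$-derivatives of $\widetilde\psi$ hitting the boundary terms vanish except the zeroth one, and it uses that $U_0$ as defined in \eqref{eq:data} collects precisely the combination $\sum_{j,k}(-1)^{j-k}(\partial_t^{j-k}D_j)(0,x,\nabla_x)u_{k-1}$ matching $K_0$. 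One must also check that $\int U_0\psi_R\,dx$ is well-defined and converges to $\int U_0\,dx>0$, which is where the hypothesis $U_0\in L^1$ and \eqref{eq:DATAcond} enter. A secondary technical point is ensuring the $g$-solution identity may be applied with $\widetilde\psi_R$ in place of a general $\mathcal C_c^\infty$ function $\eta(t)\phi(x)$ — since $\widetilde\psi_R$ is exactly of this product form this is immediate, but one should note it explicitly. The rest is a verbatim repetition of the scaling/Hölder machinery already developed for Theorem~\ref{Thm:main}.
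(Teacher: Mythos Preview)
Your proposal is correct and follows essentially the same route as the paper's proof: the scaling/H\"older machinery from Theorem~\ref{Thm:main} is reused verbatim, and the data condition enters through $K_0(\psi_R)=\int U_0\,\psi_R\to\int U_0>0$ (note the sign: your displayed formula $K_0(\psi_R)=-\int U_0\psi_R$ is a typo, but you then use the correct sign in the next line). The only difference is cosmetic: the paper merely uses $K_0(\psi_R)\geq0$ to drop the boundary term, obtains $I_R\lesssim (I_R^\sharp)^{1/p}\to0$, deduces $I=0$ hence $u=0$ a.e., and reaches a contradiction with the definition of $g$-solution; you instead carry the positive lower bound $c_0$ through and get $c_0\leq 0$ directly---a slightly cleaner endgame, but the same argument.
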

\begin{Rem}
Analogously to Corollary~\ref{Cor:weak}, if~$g$ is sufficiently smooth then the non-existence result of $g$-solution in Theorem \ref{Thm:mainCP} implies a non-existence result of global weak solution.
\end{Rem}
\begin{Rem}
Here and in the examples we will not discuss the local existence result for the equation $Lu=f(t,x)\,|u|^p$. Theorem~\ref{Thm:mainCP}  as well as its applications means that either the local solution blows up  or the local solution does not exist. No information is contained on the blow up mechanism.
\end{Rem}
\begin{Rem}\label{Rem:datareg}
If $u_k\in W^{M_k,1}$ where $M_k=\max\{m_j\,|k\le j\le m-1\}$, then $U_0\in L^1$.
\end{Rem}
\begin{Rem}\label{Rem:LucPez}
The expression of $U_0$ simplifies if $D_j$ are independent of $t$. For example this holds if~$g=g(x)$ and $L$ has constant coefficient with respect to $t$.
\\
In particular if $L_1=\dots =L_h\equiv0$ for some $h=1,\dots, m-1$ and $D_j$ is independent of $t$ for $j=h+1,\dots,m-1$ then condition \eqref{eq:DATAcond} only involves the initial data $u_h,\ldots,u_{m-1}$, and no sign assumption is needed for $u_0, \ldots, u_{h-1}$. More in general, the same property holds if $L_1=\dots =L_h\equiv0$ and~$\partial_t^{j-k}D_j(0,x,\nabla_x)=0$ for~$j=h+1,\ldots,m-1$ and $k=0,\ldots,h$.
\end{Rem}
\begin{proof}
We follow the proof of Theorem~\ref{Thm:main} with minor modifications. \\
Let~$u(t,x)$ be a $g$-solution to~\eqref{eq:CP}, in the sense of Definition~\ref{Def:solCP}.
Due to \eqref{eq:DATAcond} it follows that some data are not zero, so that $u\equiv0$ is not a solution to~\eqref{eq:CP}. We put
\[ I=\int_0^\infty \int_{\R^n} g(t,x)\,f(t,x)\,|u(t,x)|^p dxdt \in [0,\infty]\,.\]
and we shall prove that~$I=0$. This gives absurd conclusion and the theorem follows. \\
Being $U_0\in L^1$, it holds
\begin{equation}\label{eq:K0}
K_0(\phi) = \int_{\R^n} U_0(x) \, \phi(x) \, dx\,,
\end{equation}
for any~$\phi\in\mathcal{C}_c^\infty(\R^n)$. In particular,
there exists~$\overline{R}>0$ such that~$K_0(\psi_R)\geq0$ for any~$R\geq\overline{R}$.
Therefore, if we put
\[ I_R \doteq \int_0^\infty \int_{\R^n} g(t,x)\,f(t,x)\,|u(t,x)|^p \psi_R(x) \eta_R(t) dxdt \,, \]
then we obtain
\begin{equation}\label{eq:dopodato}
I_R \leq \int_0^\infty \int_{\R^n} u(t,x)\,D^*\,\bigl(\eta_R(t)\psi_R(x)\bigr)\,dx\,dt \,,
\end{equation}
for~$R\geq\overline{R}$. We can now follow the proof of Theorem~\ref{Thm:main}, obtaining $I_R\lesssim (I_R^\sharp)^{\frac1p}$ where
\[
I_R^\sharp := \int_{F_0(R)/2}^{F_0(R)} \int_{Q_R\setminus Q_R^\sharp} g(t,x)\,f(t,x)\,|u(t,x)|^p\eta_R(t)\psi_R(x)\,dx\,dt \,.
\]
This implies $I_R$ bounded with respect to $R$ and hence $I$ finite since $I_{R_k}\to I$ for a suitable sequence~$R_k\to\infty$. One can apply Lebesgue convergence theorem and get $I_{R}^\sharp\to 0$. In turn, this gives $I_R\to 0$ as~$R\to\infty$, hence $I=0$.
\end{proof}
\begin{Rem}\label{Rem:pointpos}
Let $(u_0,u_1,\dots, u_{m-1})$ be sufficiently smooth so that $U_0$ is defined, not necessarily in $L^1$, and~$U_0(x)\geq0$ a.e. Then inequality~\eqref{eq:dopodato} still holds, as well as the nonexistence result in Theorem~\ref{Thm:mainCP}.
\end{Rem}


\section
{proof of Theorem~\ref{Thm:blowup}}\label{sec:effective}

In order to prove Theorem~\ref{Thm:blowup} we first prepare some instruments.
\begin{Rem}
As a consequence of~\eqref{eq:liminfblow} we obtain
\begin{equation}\label{eq:liminftb}
\liminf_{t \to \infty} tb(t)>1\,.
\end{equation}
Indeed, there exist a sufficiently small $\delta\in(0,1)$ and $T>0$ such that~$b'(t)/b(t)^2>-(1-\delta)$ for any~$t\geq T$. It is now sufficient to integrate in~$[T,t]$, obtaining
\[
\frac1{b(T)}-\frac1{b(t)} > -(1-\delta)(t-T)
\]
hence
\[
b(t) > \frac1{1/b(T)+(1-\delta)(t-T)}\,.
\]
\end{Rem}
\begin{Rem}
Combining~\eqref{eq:liminftb} with~\eqref{eq:b1blow}, we may prove that~$b'(t)/b(t)^2<1$ for large~$t$. Since $b'(t)/b(t)^2$ is continuous, recalling~\eqref{eq:liminfblow}, we may conclude that
\begin{equation}
\label{eq:beffblow}
|b'(t)|\leq Cb^2(t)\,.
\end{equation}
\end{Rem}
\begin{Rem}\label{Rem:lambda}
On the other hand, combining~\eqref{eq:liminftb} with~\eqref{eq:beffblow} and recalling \eqref{eq:b1blow}, we derive
\begin{equation}\label{eq:Mmdiff}
\frac{-M}t \leq \frac{b'(t)}{b(t)} \leq \frac{m}t\,,
\end{equation}
for some~$M\geq0$ and~$m\in[0,1)$, for~$t\geq T$ with a sufficiently large~$T>0$. Integrating~\eqref{eq:Mmdiff} in~$[t, \lambda t]$
 and taking the exponential, it follows
\begin{equation}\label{eq:blambdat}
\lambda^{-M} \leq \frac{b(\lambda t)}{b(t)} \leq \lambda^m\,,
\end{equation}
for any $t\ge T$ and $\lambda \ge 1$.
\\
Integrating~\eqref{eq:Mmdiff} in~$[T,t]$ we get~$b(t)\leq C \,t^m$ for any~$t\geq T$, where~$C=C(T,m)$.
\end{Rem}
\begin{Rem}\label{Rem:betaL1}
By using~\eqref{eq:liminftb} we can prove that $\beta\in L^1(0,\infty)$ and that
\begin{equation}\label{eq:limbbeta}
\lim_{t\to\infty} \frac{\beta(t)}{b(t)} =0\,,
\end{equation}
where~$\beta(t)$ is defined in~\eqref{eq:beta}. Indeed, there exists $\delta>0$ and~$T>0$ such that
\[ \exp \left(-\int_T^t b(\tau)\,d\tau\right) \lesssim \exp \left(-(1+\delta)\int_T^t \frac1\tau\,d\tau\right) \leq t^{-(1+\delta)}\,, \]
for any~$t\geq T$. Therefore~$\beta\in L^1$. Moreover, $b(t)/\beta(t) \leq C t^{m-(1+\delta)}\to0$ as~$t\to\infty$, due to Remark~\ref{Rem:lambda}.
\end{Rem}
\begin{Rem}\label{Rem:b2bprime}
Thanks to~\eqref{eq:beffblow} and~\eqref{eq:liminfblow}, there exist~$\epsilon,C>0, T>0$ such that
\begin{equation}\label{eq:b2bprime}
\epsilon \leq \frac{b(t)^2+b'(t)}{b(t)^2} < C  \quad  \text{for any $t\geq T$.}
\end{equation}
In particular, $b(t)^2+b'(t)>0$, hence $\beta(t)/b(t)$ is strictly decreasing for~$t\geq T$.
\end{Rem}
\begin{Rem}\label{Rem:approxG}
We claim that:
\begin{equation}\label{eq:Gammab}
\Gamma(t)\approx \frac{\beta(t)}{b(t)} = \frac1{b(t)} \exp\left(-\int_0^t b(\tau)\,d\tau\right)\,, \qquad \text{for any~$t\in[0,\infty)$.}
\end{equation}
In order to prove \eqref{eq:Gammab}, we may integrate the relation
\[ \left(-\frac{\beta(t)}{b(t)}\right)'= \beta(t) \left( \frac{b'(t)}{b^2(t)} +1 \right) \approx \beta(t)\qquad t\ge T \,, \]
where the equivalence is a consequence of~\eqref{eq:b2bprime}, and $T>0$ is the same found in Remark~\ref{Rem:b2bprime}. Due to \eqref{eq:limbbeta}, this gives us $\Gamma(t)\approx \beta(t)/b(t)$ in~$[T,\infty)$. On the other hand, $\Gamma(t)\,b(t)/\beta(t)$ is a strictly positive continuous function in~$[0,T]$, and this concludes the proof of~\eqref{eq:Gammab}.
\end{Rem}
\begin{Rem}\label{Rem:approxB}
We claim that
\begin{equation}\label{eq:Btb}
B(t)\approx t/b(t)\,, \qquad \text{for any~$t\in[0,\infty)$.}
\end{equation}
Similarly to Remark~\ref{Rem:approxG}, using~\eqref{eq:b1blow} and \eqref{eq:Mmdiff} one can prove that
\begin{equation}\label{eq:bbB}
\left(\frac{t}{b(t)}\right)'= \frac1{b(t)} \left( 1 - \frac{tb'(t)}{b(t)} \right) \approx \frac1{b(t)}\,,
\end{equation}
for any $t\geq \tilde T$, for a suitable $\tilde T>0$. Indeed, from \eqref{eq:Mmdiff} one can deduce $-{\tilde M}b(t)\le tb'(t)$ for any $t\geq0$ and for a suitable $\tilde M >0$. It follows that $t/b(t)$ is strictly increasing for~$t\geq \tilde T$, and that $B(t)\approx t/b(t)$ in~$[\tilde T,\infty)$.
On the other hand, the function~$t|b'(t)|/b(t)$ is continuous in~$[0,\tilde T]$, therefore there exists~$t_0\in(0,\tilde T]$ such that~$t|b'(t)|/b(t)<1$ for any~$t\in[0,t_0]$, hence~\eqref{eq:bbB} remains true in $[0,t_0]$ and $B(t)\approx t/b(t)$ in this interval. Finally, if $t_0<\tilde T$, being $B(t)$ and $t/b(t)$ continuous strictly positive functions in~$[t_0,\tilde T]$, we can conclude that~$B(t)\approx t/b(t)$ in~$[t_0,\tilde T]$ too. This concludes the proof of~\eqref{eq:Btb}.
\\
Recalling \eqref{eq:blambdat} we get
\begin{equation}\label{eq:Blambdan}
\lambda^{1-m} \lesssim \frac{B(\lambda t)}{B(t)} \lesssim \lambda^{1+M}\,,
\end{equation}
for any $t\geq T$ and $\lambda \ge 1$.
\end{Rem}


We are now ready to prove Theorem~\ref{Thm:blowup}.
\begin{proof}
We define~$g(t)\doteq \Gamma(t)/\beta(t)$; this is the solution to the following Cauchy problem:
\begin{equation}\label{eq:geq}
\begin{cases}
-g'(t)+g(t)b(t)=1, \quad t>0, \\
g(0)=\frac1{\hat{b}_1}.
\end{cases}
\end{equation}
We remark that~$g\in\mathcal{C}^2$, since~$b\in\mathcal{C}^1$, therefore Remark~\ref{Rem:weak} is applicable and we will establish non-existence results for the weak solution to~\eqref{eq:dissblow}.
\\
With this choice of $g(t)$, putting $D=g(t)(\partial_{tt}-a(t)\Delta+b(t)\partial_t)$, we see that $D^*$ does not contain the zero order term:
\[
D^* = g(t)\partial_t^2 - g(t)\,a(t)\triangle + (g'(t)-1)\partial_t\,.
\]
Moreover
\begin{align*}
U_0(x)= \left( g(t)b(t) u_0(x) - g'(t) u_0(x) + g(t) u_1(x) \right)_{t=0} = u_0(x) + \frac1{\hat{b}_1} u_1(x)\,.
\end{align*}
and the initial data condition~\eqref{eq:datablow} is equivalent to~\eqref{eq:DATAcond}.
\\
Let us define~$A:[0,\infty)\to[0,\infty)$ to be the inverse of the function~$B(t)$. It follows that~$A$ is strictly increasing and bijective. We set~$F_0(R)\doteq A(R^d)$ with a suitable~$d>0$ which we will choose later, and~$F_i(R)=R$ for any~$i=1,\ldots,n$, so that
\[ H_{2e_0}(R) = A(R^d)^{-2}\,, \qquad H_{e_0}(R) = A(R^d)^{-1} \,, \qquad H_{2e_i}(R) = R^{-2}\,, \qquad i=1,\ldots,n\,. \]
By using the growth assumptions on $f(t,x)$ and $a(t)$, we obtain
\begin{align*}
G_{2e_0}(R)
	& \lesssim \int_{A(R^d)/2}^{A(R^d)} g(t)\, B(t)^{-\gamma(p'-1)} \int_{Q_R} |x|^{-\delta(p'-1)} dx\,dt \,,\\
G_{e_0}(R)
	& \lesssim \int_{A(R^d)/2}^{A(R^d)} \frac{|1-g'(t)|^{p'}}{(g(t)\,B(t)^\gamma)^{p'-1}} \int_{Q_R} |x|^{-\delta(p'-1)} dx\,dt \,,\\
G_{2e_i}(R)
	& \lesssim  \int_0^{A(R^d)} g(t)\, B(t)^{-\gamma(p'-1)-\alpha p'} \int_{Q_R\setminus Q_{R/2}} |x|^{-\delta(p'-1)} dx\,dt\,.
\end{align*}
It is clear that
\[ \int_{Q_R} |x|^{-\delta(p'-1)} dx \approx R^{n-\delta(p'-1)} \,,\]
since~$\delta(p'-1)<n$, thanks to the assumption
\begin{equation}\label{eq:pmin1}
p>1+\frac\delta{n}\,.
\end{equation}
By using~\eqref{eq:Gammab} we have $g(t)=\Gamma(t)/\beta(t) \approx 1/b(t)$; recalling the definition of~$B(t)$ and~$A(s)$ it follows
\begin{align*}
\int_0^{A(R^d)} g(t)\,B(t)^{-\alpha p'}\, B(t)^{-\gamma(p'-1)} dt
	& \lesssim \int_0^{A(R^d)} \frac1{b(t)}\, B(t)^{-\alpha p'-\gamma(p'-1)}\, dt \\
	& \approx B(t)^{1-\alpha p'-\gamma(p'-1)}|_{t=A(R^d)} = R^{d(1-\alpha p'-\gamma(p'-1))}\,.
\end{align*}
Indeed, $\alpha p'+\gamma(p'-1)<1$ thanks to the assumption
\begin{equation}\label{eq:pmin2}
(1-\alpha) \, p> \gamma+1 \,.
\end{equation}
We remark the assumption $p>p_{\min}$ in Theorem~\ref{Thm:blowup} implies \eqref{eq:pmin1} and \eqref{eq:pmin2}.

Therefore we have
\[ G_{2e_i}(R)\lesssim R^{n+d-d\alpha p'-(d\gamma+\delta)(p'-1)}\,, \qquad i=1,\ldots,n\,.\]
To deal with $G_{2e_0}$ we compute
\[
\int_{A(R^d)/2}^{A(R^d)} g(t)\,B(t)^{-\gamma(p'-1)} dt \lesssim
\begin{cases}
B(t)^{1-\gamma(p'-1)}|_{t=A(R^d)} & \text{ if } \gamma(p'-1)<1,\\
\log B(t)|_{t=A(R^d)}-\log B(t)|_{t=A(R^d)/2} & \text{ if }\gamma(p'-1)=1,\\
B(t)^{1-\gamma(p'-1)}|_{t=A(R^d)/2} & \text{ if }\gamma(p'-1)>1.
\end{cases}
\]
Recalling that~$A:[0,\infty)\to[0,\infty)$ is a strictly increasing, surjective function, and that~$d>0$, it follows that $A(R^d)\to\infty$ as $R\to \infty$. Therefore we get $B(A(R^d))\approx B(A(R^d)/2)$ for large~$R$, thanks to~\eqref{eq:Blambdan}. Assuming \eqref{eq:pmin1}, in all the three cases we conclude that
\[ G_{2e_0} (R)\lesssim R^{n+d-(d\gamma+\delta)(p'-1)}\, ,\]
for large~$R$.
It remains to estimate $G_{e_0}(R)$. By using $1-g'=2-gb$ and $g\approx 1/b$, see~\eqref{eq:Gammab}, it follows that $1-g'$ is bounded. Hence
\[
\int_{A(R^d)/2}^{A(R^d)} \frac{|1-g'(t)|^{p'}}{g(t)^{p'-1}}\, B(t)^{-\gamma(p'-1)} dt
	 \lesssim \int_{A(R^d)/2}^{A(R^d)} b(t)^{p'}\,\frac1{b(t)}\, B(t)^{-\gamma(p'-1)} dt\,,
\]
for a sufficiently large~$R$. Now, we use~\eqref{eq:blambdat}. In particular, we have $b(s)\approx b(t)$ for any~$s\in[t/2,t]$, for sufficiently large~$t$, namely~$t\geq 2T$. Proceeding as before, we find
\[
G_{e_0}(R)
	\lesssim b(A(R^d))^{p'} R^{n+d-(d\gamma+\delta)(p'-1))} \,,
\]
for large~$R$. On the other hand, the equivalence \eqref{eq:Btb} implies $b(A(R^d))\approx A(R^d)\,R^{-d}$, so that we can conclude
\[
G_{e_0}(R)
	\lesssim (A(R^d))^{p'} R^{n-(d+d\gamma+\delta)(p'-1)}\,,
\]
for large~$R$. Summarizing,
\begin{align}
\label{eq:H2i}
H_{2e_i}(R)\,(G_{2e_i}(R))^{\frac1{p'}}
	& \lesssim R^{-2-d\alpha-d\gamma-\delta+\frac{n+\delta+d(1+\gamma)}{p'}}\,, \qquad i=1,\ldots,n\,, \\
\label{eq:H20}
H_{2e_0}(R)\,(G_{2e_0}(R))^{\frac1{p'}}
	& \lesssim A(R^d)^{-2}\,R^{-d\gamma-\delta+\frac{n+\delta+d(1+\gamma)}{p'}}\,,\\
\label{eq:H10}
H_{e_0}(R)\,(G_{e_0}(R))^{\frac1{p'}}
	& \lesssim R^{-d-d\gamma-\delta+\frac{n+\delta+d(1+\gamma)}{p'}}\,,
\end{align}
for large~$R$. But the estimate in~\eqref{eq:H20} is better than the estimate in~\eqref{eq:H10}, since $R^d\lesssim A(R^d)^2$. Indeed, this is equivalent to~$B(t)\lesssim t^2$, which holds true due to~\eqref{eq:Btb} and~\eqref{eq:liminftb}. 
Looking for~$d>0$ such that the estimates in~\eqref{eq:H2i} and~\eqref{eq:H10} are equal, we immediately find~$d=2/(1-\alpha)$.
\\
Therefore condition~\eqref{eq:GHCP} holds if
\[ d(1+\gamma) + \delta \geq \frac{n+d(1+\gamma)+\delta}{p'} \,, \qquad \text{i.e.} \qquad p\leq 1 + \frac{d(1+\gamma)+\delta}n = 1 + \frac{2(1+\gamma)}{n(1-\alpha)}+\frac\delta{n} \,. \]
By applying Theorem~\ref{Thm:mainCP}, we conclude the proof.
\end{proof}

\subsection{Examples for the damping term}

\begin{Example}\label{Ex:bk}
Let us choose
\begin{equation}\label{eq:bk}
b(t)=\frac\mu{(1+t)^\expo} \quad \text{for some~$\mu>0$ and~$\expo\in(-1,1]$.}
\end{equation}
Being $\expo\in(-1,1]$, Hypothesis~\ref{Hyp:blowup} holds, provided that~$\mu>1$ if~$\kappa=1$. Indeed 
\begin{align*}
\lim_{t\to\infty} \frac{b'(t)}{(b(t))^2}
    & = -\frac\expo\mu\,\lim_{t\to\infty} \frac1{(1+t)^{1-\expo}} = \begin{cases}
-1/\mu & \text{if~$\expo=1$,}\\
0 & \text{if~$\expo\in(-1,1)$,}
\end{cases} \\
\lim_{t\to\infty} \frac{t\,b'(t)}{b(t)}
    & = -\expo \lim_{t\to\infty} \frac{t}{1+t} = -\expo \,.
\end{align*}
We notice that $B(t)\approx t^{1+\expo}$.
\end{Example}
\begin{Example}\label{Ex:bkplus}
We may consider perturbations of~\eqref{eq:bk} by taking
\[ b(t)=\frac\mu{(1+t)^\expo}\,v(t)\,, \qquad \text{with~$\mu>0$ and~$0<|\kappa|<1$,} \]
where $v\in \mathcal{C}^1([0,\infty))$ satisfies $v(t)>0$ and
\begin{equation}\label{eq:vpert}
\lim_{t\to\infty} \frac{tv'(t)}{v(t)} = 0\,.
\end{equation}
It immediately follows that
\begin{align*}
\frac{b'(t)}{b(t)}
    & = -\frac\expo{1+t} \left( 1 + \frac{(1+t)\,v'(t)}{\expo\,v(t)}\right)\,,
\intertext{has the same asymptotic profile of~$-\expo/(1+t)$, due to~\eqref{eq:vpert}. Therefore}
\liminf_{t\to\infty}\frac{b'(t)}{b^2(t)}
    & = - \frac1\mu \,\limsup_{t\to\infty} \frac\expo{(1+t)^{1-\expo}\,v(t)} \,,\\
\lim_{t\to\infty} \frac{t\,b'(t)}{b(t)}
    & = -\expo\,.
\end{align*}
Assumption~\eqref{eq:b1blow} is immediately satisfied, and assumption~\eqref{eq:liminfblow} trivially holds if~$v(t)$ also verifies
\begin{equation}\label{eq:vpert2}
\lim_{t\to\infty} (1+t)^{1-\expo}\,v(t) = +\infty\,.
\end{equation}
In particular, conditions~\eqref{eq:vpert} and~\eqref{eq:vpert2} hold if
\[
v(t)=(\log (e+t))^\gamma
\] for any~$\gamma\in\R$. More in general, if it is an iteration of logarithmic functions, possibly with different powers, like as:
\begin{align*}
v(t)
    & = (\log (e+ (\log (e +t))^{\gamma_2}))^{\gamma_1}, \qquad \text{or} \\
v(t)
    & = (\log (e+ (\log (e + (\log (e+\ldots )))^{\gamma_3}))^{\gamma_2})^{\gamma_1}.
\end{align*}
Conditions~\eqref{eq:vpert} and~\eqref{eq:vpert2} hold if $v(t)=w((1+t)^{-\alpha})$ for some~$\alpha>0$, where $w: [0,1]\to (0,\infty)$ is a strictly positive, $\mathcal{C}^1$ function. Indeed, $w$, $1/w$ and $w'$ are bounded functions, hence~\eqref{eq:vpert} and~\eqref{eq:vpert2} are satisfied. In this case, we still have $B(t)\approx t^{1+\expo}$. For instance,
\[ v(t) = 1+(1+t)^\alpha \sin(1+t)^{-2\alpha}
\]
obtained by taking $ w(r)=1+\frac{\sin r^2}r $ for $r\in(0,1]$ and $w(0)=1$.
\end{Example}


\section{Examples}\label{sec:Examples}

In the statements of Theorems \ref{Thm:main} and~\ref{Thm:mainCP}, for fixed $p>1$ we check our assumptions on a suitable $g$. One can pose the question on the structure of the set of exponents $p>1$ such that these assumptions hold. For a fixed~$g$, it is evident that if the assumptions hold for $p_1>1$ then they also holds for any $1<p<p_1$. Hence one may look for the largest interval $(1,\bar p)$ or $(1,\bar p]$ such that the assumptions remain valid. The literature sometimes refers to the exponent $\bar p$ as  critical Fujita exponent. In this Section, we will call these exponents \emph{Fujita-type}, whereas by Fujita exponent we only means~$p_\Fuj(n)\doteq 1+2/n$. Indeed, for semi-linear heat equation a nonexistence result for $1<p\le p_\Fuj(n)$ was provided by Fujita~\cite{fujita} (see Example~\ref{Ex:Fujita}). In the same paper one can find a global existence result for $p>p_\Fuj(n)$ and suitable initial data condition.

We are specially interested in Fujita-type exponents ~$\bar p$ for which it is known an existence result for $p>\bar p$. In such a case, we will say that they are \emph{critical}.

Let us remark that for the same equation one can find different exponents $\bar p$ for which the solution globally exists for~$p>\bar p$, according to which kind of solution one is interested in. The Fujita-type exponent is smaller than any possible existence exponent. In the critical case it coincides with an existence exponent.
\\
A typical example in this direction is given by the wave operator in dimension $n\ge 3$: the real numbers
$$
p_\Fuj(n-1)=1+\frac{2}{n-1}<p_\Str(n-1)=\frac{1}{2}+\frac{1}{n-1}+ \Big(\Big(\frac{1}{2}+\frac{1}{n-1}\Big)^2+\frac{2}{n-1}\Big)^{1/2}<p_\Sob(n)=\frac{n+2}{n-2}
$$
are respectively the Fujita-type exponent, the critical exponent for small amplitude solution, the large data critical exponent, see \cite{strauss}.
This says that for suitable small data and for $p_\Fuj(n-1)<p<p_\Str(n-1)$ pointwise solutions to $u_{tt}-\Delta u=|u|^p$ do not exist but weak solutions may ``survive''.
\\
A general discussion on the critical exponents can be found in~\cite{DeL, L}.

We first see how taking $g\equiv1$ in Theorems~\ref{Thm:main} and~\ref{Thm:mainCP} we find some already known nonexistence results for \emph{quasi-homogeneous} operators.
\begin{Example}\label{Ex:DL}
Theorem~\ref{Thm:main} easily applies to \emph{quasi-homogeneous} operators~$L$ such that~$L^*$ contains no zero order terms. In particular, following~\cite{DL}, let us write~$x=(x_1,x_2)$, with~$x_j\in\R^{n_j}$ and~$N_1+N_2=N$. We assume that there exists~$(d_1,d_2,h)$, with~$d_1,d_2>0$ and~$h\in\R$, such that the symbol of $L$ satisfies
\begin{equation}\label{eq:DL}
L (\lambda^{-d_1} x_1,\lambda^{-d_2} x_2, \lambda^{d_1}\xi_1,\lambda^{d_2}\xi_2) = \lambda^h L(x_1,x_2,\xi_1,\xi_2)\,,
\end{equation}
for any~$\lambda>0$. Then we set $F_i(R)=R^{d_1}$ for~$i=1,\ldots,N_1$ and~$F_i(R)=R^{d_2}$ for~$i=N_1+1,\ldots,N$. Using~\eqref{eq:DL} one can prove that
\begin{equation}\label{eq:DLscale}
L^* S_R = R^{-h} \, S_R\,L^* \,.
\end{equation}
If we write~$L^*$ as
\[ L^*=\sum_{1\le |\alpha|\le m}a_\alpha(x)\partial_x^\alpha\,,\]
then each operator~$a_\alpha(x)\partial_x^\alpha$ satisfies \eqref{eq:DL}-\eqref{eq:DLscale}. Therefore
\[ a_\alpha (x\cdot F(R)) = h_\alpha(R)\, a_\alpha(x)\,, \qquad \text{where}\quad h_\alpha(R)= R^{-h} \, \bigl(\prod_{i=1}^{N_1} R^{d_1\alpha_i} \bigr) \,\bigl(\prod_{i=N_1+1}^N R^{d_2\alpha_i} \bigr) \, \,. \]
Let us assume that~$f(x_1,x_2)\approx |x_1|^{\theta_1}\,|x_2|^{\theta_2}$ for some~$\theta_1,\theta_2\in\R$, and let~$g\equiv1$. Since
\begin{align}
\label{eq:GalphaDL}
G_\alpha(R)
	& \leq \det F(R)\,\int_{C_1} (h_\alpha(R)\, a_\alpha(x))^{p'}\,\bigl(|R^{d_1}\,x_1|^{\theta_1}|R^{d_2}\,x_2|^{\theta_2}\bigr)^{-(p'-1)}\,dx \,, \\
\label{eq:HalphaDL}
H_\alpha(R)
	& = \bigl(\prod_{i=1}^{N_1} R^{-d_1\alpha_i} \bigr) \,\bigl(\prod_{i=N_1+1}^N R^{-d_2\alpha_i} \bigr)\,,
\end{align}
we derive the estimate
\[ H_\alpha(R)\,(G_\alpha(R))^{\frac1{p'}} \lesssim R^{-h+\frac{(d_1\,N_1+d_2\,N_2)-(d_1\,\theta_1+d_2\,\theta_2)\,(p'-1)}{p'}} = R^{-(h+\theta)+\frac{d+\theta}{p'}} \,, \]
where we put~$\theta=d_1\,\theta_1+d_2\,\theta_2$ and~$d=d_1\,N_1+d_2\,N_2$, for any~$1\leq|\alpha|\leq m$. Applying Theorem~\ref{Thm:main} we obtain a nonexistence result for the Liouville problem~$Lu=f|u|^p$, if
\begin{equation}\label{eq:expDL}
p \leq \frac{d+\theta}{d-h} = 1 + \frac{\theta+h}{d-h}\,,
\end{equation}
provided that~$h\in(-\theta,d)$. The bound on the exponent~$p$ is the same obtained in~\cite{DL}. We remark that \eqref{eq:GalphaDL} implicitly requires $|x_1|^{-\theta_1}|x_2|^{-\theta_2} a_\alpha(x)\in L^{p'}_{\lloc}(|x_1|^{\theta_1}|x_2|^{\theta_2})$. This may give a lower bound for the exponent $p$.
\end{Example}
\begin{Example}\label{Ex:Fujita}
It is clear that the approach in Example~\ref{Ex:DL} holds in the setting of Cauchy problem for \emph{quasi-homogeneous} operators, provided that data satisfy~\eqref{eq:DATAcond}.

According to Theorem~\ref{Thm:mainCP}, for the semilinear heat equation
\[ u_t - \triangle u = |u|^p\,, \qquad u(0,x)=u_0(x)\,, \]
we find nonexistence of global solutions for any~$1<p\leq p_\Fuj(n)=1+2/n$, provided that~$u_0\in L^1$ and
\[ \int_{\R^n} u_0(x)\,dx > 0 \,. \]
Indeed, the heat operator is quasi-homogeneous of type (2,1,2) in $[0,\infty)\times \R^n$. 

If one considers the Cauchy problem for the semilinear Schr\"odinger equation
\[ Lu\equiv i u_t + \triangle u = |u|^p\,, \qquad u(0,x)=u_0(x)\,, \]
one finds again a nonexistence result for~$1<p\leq1+2/n$, provided that~$\Im u_0\in L^1$ and that
\[ \int_{\R^n} \Im u_0(x)\,dx < 0 \,. \]
We address the interested reader to~\cite{IW}, where the same result is extended to nonlinearities of type $\lambda |u|^p$, where~$\lambda$ is complex-valued. We remark that for $p>1+2/n$ the scattering theory of Schr\"odinger equation becomes meaningful, see \cite{TsYa}.
\\
The classical semilinear wave equation
\[ u_{tt} - \triangle u = |u|^p\,, \qquad u(0,x)=u_0(x) \quad u_t(0,x)=u_1(x)\,, \]
is also \emph{quasi-homogeneous} of type (2,1,1) and we find Kato exponent~ $p_\Kato(n)=1+2/(n-1)=p_\Fuj(n-1)$, provided that~$u_1\in L^1$ and that
\[ \int_{\R^n} u_1(x)\,dx > 0 \,. \]
We remark that Kato's result of nonexistence is obtained in~\cite{Kato} by comparison method, so that it holds for a larger class of operators.
\end{Example}
In some cases estimate~\eqref{eq:GalphaDL} can be refined to relax the restrictions on the exponent~$p$.
\begin{Example}\label{Ex:Tricomi}
Following Example~\ref{Ex:DL} and~\cite{DL}, one can find a nonexistence result for the semilinear equation for the Grushin operator
\[ \triangle_x u + g_\gamma(x) \triangle_y u = |x|^{\theta_1}\,|y|^{\theta_2}\,|u|^p\,,  \]
where $g_\gamma(x)$ is a homogeneous function of order $2\gamma$ with $\gamma\in\R$, $x\in\R^k$ for some~$1\leq k\leq N-1$ and~$y\in\R^{N-k}$. We are dealing with a quasi-homogeneous operator of type $(2,1,1+\gamma)$, so that the admissible range for $p$ is given by
\[ 1 + \max \left\{ \frac{[\theta_1]^+}k, \frac{[\theta_2]^+}{N-k} \right\} < p \leq 1 + \frac{2+\theta_1+(1+\gamma)\theta_2}{N+(N-k)\gamma-2} \,\]
if $(k+2\gamma)p>k+\theta_1$ and $(\gamma,\theta_1,\theta_2)$ satisfies $2+\theta_1+(1+\gamma)\theta_2>0$ and $N+(N-k)\gamma-2>0$.

In particular, for $k=1$ and $2\gamma\in \N^*$, we have the Tricomi Operator $\partial_{xx}  + x^{2\gamma}\triangle_y $. We consider the Liouville problem
\begin{equation}\label{eq:Tricomi}
u_{xx} + x^{2\gamma}\triangle_y u = |x|^\theta\,|u|^p\,,
\end{equation}
where~$x\in\R$ and~$y\in\R^{N-1}$, $N\ge 2$ and $\theta>-2$. Since $\max\{[\theta]^+, [\theta-2\gamma]^+\}=[\theta]^+$, one obtains a nonexistence result for
\begin{equation}\label{eq:tric}
1+[\theta]^+ < p \leq 1 + \frac{2+\theta}{N+(N-1)\gamma-2} \,.
\end{equation}
Let us see that the lower bound on~$p$ can be relaxed if we directly apply the estimate for~$G_\alpha$ given in~\eqref{eq:GalphaR}. Fixed $F(R)=(R^d,R,\ldots,R)$, for some~$d>0$,  we compute
\[ H_{\alpha} (R) = \begin{cases}
R^{-2d} & \text{if~$\alpha=2e_1$,}\\
R^{-2} & \text{if~$\alpha=2e_k$, \quad $k\geq2$.}
\end{cases} \]
Moreover, we can immediately estimate:
\begin{align}
\label{eq:G2e1}
G_{2e_1} (R)
    & \approx R^{d+N-1} \int_{1/2\leq |x|\leq1} (f(x\,R^d))^{-(p'-1)} dx \approx R^{d+N-1-d\,\theta(p'-1)} \,, \\
\label{eq:G2ek}
G_{2e_k} (R)
    & \approx R^{d+N-1} \int_{|x|\leq 1} |x\,R^d|^{2\gamma p'} (f(x\,R^d))^{-(p'-1)}\,dx \approx R^{d+N-1+2\gamma dp'-d\,\theta(p'-1)} \,,\quad \text{$k\geq2$,}
\end{align}
provided that~$2\gamma p'-(p'-1)\theta)>-1$. We remark that we do not have integrability problems in~\eqref{eq:G2e1} due to the fact that we integrate in~$1/2\leq|x|\leq1$. This approach represents an improvement with respect to the estimate~\eqref{eq:tric}.
Taking $d>0$ such that the same 
exponent appears in \eqref{eq:G2e1} and \eqref{eq:G2ek}, from Theorem~\ref{Thm:main}, we get
\begin{equation}\label{eq:pTricomi}
1 + \frac{[\theta-2\gamma]^+}{1+2\gamma} < p \leq 1 + \frac{\theta+2}{N+(N-1)\gamma-2} \,
\end{equation}
provided that $\theta>-2$. We remark that for some $\theta, N,\gamma$ the previous range for $p$ is empty. The same result holds for
\[
 u_{xx} + x^{2\gamma}\triangle_y u = f(x)\,|u|^p\,,
\]
for any $f$ such that $f(x)\gtrsim |x|^\theta$.
We remark that if~$f(x)\gtrsim \<x\>^\Theta$ for some~$\Theta>1$ in~\eqref{eq:Tricomi}, then assumption~$f(x)\gtrsim |x|^\theta$ is verified for any~$\theta\in [1,\Theta]$, hence the bound in~\eqref{eq:pTricomi} is replaced by
\begin{equation}\label{eq:pTricomi2}
1 < p \leq 1 + \frac{\theta+2}{N+(N-1)\gamma-2} \,.
\end{equation}
\end{Example}
Condition~\eqref{eq:pTricomi} improves from below the range of admissible expoenents~$p$ for Grushin operators with nonlinearities~$|x|^\theta$ with respect to~\cite{DL}. Let us come back to the original question posed by Deng and Levin in~\textsection 5 in~\cite{DeL}. Taking a nonlinearity in the form
\[ f(x)|u|^p = |x^{(1)}|^{\theta_1}\,\ldots\,|x^{(k)}|^{\theta_k} |u|^p \, \]
where~$x^{(j)}\in\R^{N_j}$ and~$N_1+\ldots+N_k=N$, we may obtain more admissible exponents for a quasi-homogeneous operator with constant coefficients, with respect to the approach in~\cite{DL}. This improvement is a consequence of the choice of using nonradial test functions in the form~\eqref{eq:psiprod}. Indeed, in this way, for any~$\alpha\in\N^N$ such that~$\partial_x^\alpha x_k^{(j)}=0$ for some~$j$ and for any~$k=1,\ldots,N_j$, the domain of~$G_\alpha$ absorbs potential singularities coming from~$|x^{(j)}|^{\theta_j}$ if~$\theta_j>0$.
\begin{Example}
We notice that no sign assumption on the coefficients plays a particular role in Example~\ref{Ex:Tricomi}, therefore the result of nonexistence is still valid for $u_{xx} -x^{2\gamma}\triangle_y u = |x|^\theta\,|u|^p$. As a consequence we may study the Cauchy problem
\begin{equation}\label{eq:wave}
\begin{cases}
u_{tt} - t^{ 2\gamma}\triangle u = f(t)\,|u|^p\,\quad t\in [0,\infty) \text{ and } x\in\R^n\\
u(0,x)=u_0(x)\,,\\
u_t(0,x)=u_1(x)\,,
\end{cases}
\end{equation}
where $f(t)$ is a positive function satisfying~$f(t)\gtrsim t^\theta$ for some~$\theta>-2$.
If~$u_1\in L^1$ and
\[ \int_{\R^n} u_1(x)\,dx > 0 \,, \]
then there exists no global weak solution to~\eqref{eq:wave} if 
\begin{equation}\label{eq:pTriwave}
1 + \frac{[\theta-2\gamma]^+}{1+2\gamma} < p \leq 1 +\frac{\theta+2}{n+n\gamma-1} \,.
\end{equation}
For $\gamma=0$ we find again Kato exponent~$p_\Kato(n)$.
\end{Example}
Even if an operator is quasi-homogeneous, it may happen that~$L^*$ contains a term of order zero. In this case, Theorems \ref{Thm:main} and \ref{Thm:mainCP} come into play if one finds a function~$g$ such that~$D^*=(gL)^*$ does not contain zero order terms. In the next example $D^*$ will also remain quasi-homogeneous.
\begin{Example}\label{Ex:HL} 
Let us consider the Cauchy problem
\begin{equation}\label{eq:HL} \begin{cases}
Lu\equiv \partial_t^m u -\triangle u + \frac\lambda{|x|^2} =|u|^p \,\quad t\ge 0, \, x\in \R^n\\
\partial_t^k u(0,x)=u_k(x)\,,\qquad k=0,\ldots,m-1\,,
\end{cases}
\end{equation}
where~$\lambda\ge 0$, in space dimension~$n\geq3$. Assuming~$u_{m-1}(x)\geq0$ a.e. and defining
\[ s=s(n,\lambda) \doteq \sqrt{\frac{(n-2)^2}4+\lambda}\, - \frac{n-2}2 \,, \]
in~\cite{HL} it was proved that there exists no global solution to~\eqref{eq:HL} for
\begin{equation}\label{eq:HLcrit}
1 < p \leq 1 + \frac2{n-2 + s + 2/m} \,.
\end{equation}
According to Remarks~\ref{Rem:LucPez} and~\ref{Rem:pointpos}, we may obtain the same result using Theorem~\ref{Thm:mainCP}, provided that~$s>1$, that is, $\lambda>n-1$. Indeed, if~$u$ is a $g$-solution, we may set~$F_0(R)=R^{\frac2m}$, $F_i(R)=R$ for~$i=1,\ldots,n$, and~$g(x)=|x|^{s}$. Therefore
\begin{equation}\label{eq:HLg}
-\triangle g + \frac\lambda{|x|^2} g = 0\,,
\end{equation}
so that
\[ D^* = g(x) (-1)^m \partial_t^m -g(x)\triangle -\nabla g(x) \cdot \nabla \]
does not contain terms of order zero. We notice that the assumption $s>1$ comes into play to guarantee that~$D^*$ has $L^\infty_\lloc$ coefficients. Moreover, we may replace the assumption~$u_{m-1}(x)\geq0$ by taking $u_{m-1}\in L^1(g(x)dx)$ and
\[ \int_{\R^n} g(x) u_{m-1}(x)\,dx > 0\,. \]
Stretching a little bit Theorem~\ref{Thm:mainCP}, we may also consider the case~$\lambda\in[0,n-1]$, in particular if we assume $u_{m-1}(x)\geq0$ and we want to prove the non existence of \emph{weak solution}.
\end{Example}
\begin{Rem}\label{Rem:Katomod} 
We remark that for~$m=2$ in~\eqref{eq:HLcrit} one finds
\[ 1 < p \leq 1 + \frac2{n-1+s} \,. \]
The upper bound gives Kato exponent~$p_\Kato(n)=1+2/(n-1)$ if~$\lambda=0$, and it can be considered like a Kato exponent modified by the influence of the mass term if~$\lambda>0$. We remark that~$s\geq0$, so that it is worse than~$p_\Kato(n)$. Similarly for $m=1$ the relation \eqref{eq:HLcrit} becomes
\[ 1 < p \leq 1 + \frac2{n+s} \,, \]
which is a Fujita modified exponent. Moreover in~\cite{HL} it is shown that this exponent is critical, that is, there exist global positive solutions to~\eqref{eq:HL} for~$m=1$.
\end{Rem}
If we add a mass term in the form~$a(t)\,\lambda/|x|^2$ to the damped wave equation in~\eqref{eq:dissblow}, then we can use again a \emph{modified} test function method. Moreover, in this case we are no more dealing with \emph{quasi-homogeneous} operators.
\begin{Example}\label{Ex:HLD} 
Let~$b(t)$ be as in Hypothesis~\ref{Hyp:blowup} and let us consider the Cauchy problem
\begin{equation}
\label{eq:dissblowHL}
\begin{cases}
u_{tt}-a(t)\triangle u+b(t)u_t + a(t)\,\frac\lambda{|x|^2} u =f(t,x)\,|u|^p, & t\geq0, \ x\in\R^n,\\
u(0,x)=u_0(x), \\
u_t(0,x)=u_1(x).
\end{cases}
\end{equation}
where~$n\geq3$ and $\lambda>n-1$. Now we may apply Theorem~\ref{Thm:mainCP} with~$g=g_1(t)\,g_2(x)$ where~$g_1(t)=\Gamma(t)/\beta(t)$ as in the proof of Theorem~\ref{Thm:blowup}, and~$g_2(x)=|x|^{s}$ as in Example~\ref{Ex:HL}.
\\
By virtue of~\eqref{eq:geq} and~\eqref{eq:HLg}, we have
\[ D^* = g(t,x)\partial_t^2 - g(t,x)\,a(t)\triangle + g_2(x)\,(g_1'(t)-1)\partial_t - a(t)\,g_1(t)\nabla g_2(x)\cdot \nabla \,. \]
We claim that if
\begin{itemize}
\item $0<a(t)\lesssim B(t)^{-\alpha}$ for some~$\alpha<1$,
\item $f(t,x)\gtrsim B(t)^\gamma\,|x|^\delta$ for some $\gamma>-1$ and~$\delta\in\R$,
\end{itemize}
as in Theorem~\ref{Thm:blowup}, then there exists no global solution for
\begin{equation}\label{eq:HLDcrit}
1 + \max \left\{ \frac{[\gamma+\alpha]^+}{1-\alpha} , \frac{[\delta]^+}{n+s} \right\} <p \leq 1 + \frac2{n+s}\,\left(\frac{1+\gamma}{1-\alpha} + \frac\delta2\right)\,,
\end{equation}
provided that~$u_0,u_1\in L^1(g_2(x)dx)$ and
%
\[ \int_{\R^n} g_2(x)\,\big(u_1(x)+\hat{b}_1u_0(x)\big) dx > 0\,.\]
%
Indeed, it is sufficient to follow the proof of Theorem~\ref{Thm:blowup}, replacing
\[ \int_{Q_R} |x|^{-\delta(p'-1)}\,dx \qquad \text{with} \qquad \int_{Q_R} |x|^{s-\delta(p'-1)}\,dx \]
in the estimates for~$G_{2e_0}, G_{e_0}$ and $G_{2e_i}$ for~$i=1,\ldots,n$. Moreover, we have to consider the new terms
\[ G_{e_i}(R) \lesssim \int_0^{A(R^d)} g_1(t)\,B(t)^{-\gamma(p'-1)-\alpha p'} \int_{Q_R\setminus Q_{R/2}} |x|^{s -p'-\delta(p'-1)}\,dx dt\,, \]
for any~$i=1,\ldots,n$. Since~$s-\delta(p'-1)>1$ thanks to~\eqref{eq:HLDcrit}, setting again~$d=2/(1-\alpha)$, we may then arrive at the estimate
\[ H_{\tilde{\alpha}}(R)\,(G_{\tilde{\alpha}}(R))^\frac1{p'} \lesssim R^{\frac1{p'}(n+s) -\frac1p(\frac{1+\gamma}{1-\alpha}+\delta)} \,, \]
for any~$\tilde{\alpha}=2e_i, e_i$, for~$i=0,\ldots,n$. Therefore we can apply Theorem~\ref{Thm:mainCP}, proving our claim.
\\
If~$\gamma=-\alpha$, as in Example~\ref{Ex:special} and~$\delta>-2$, then~\eqref{eq:HLDcrit} becomes:
\[ 1 + \frac{[\delta]^+}{n+s} <p \leq 1 + \frac{2+\delta}{n+s}\,.\]
For~$\delta=0$ we have nonexistence for ~$1<p\leq 1+2/(n+s)$, the Fujita exponent modified by the influence of the mass term, as in Remark~\ref{Rem:Katomod}.
\end{Example}

\begin{Rem}
An analogous result can be obtained for the Liouville problem associated to the equation
$$
\partial_{yy}\pm a(y)\triangle u+b(y)u_y \pm a(t)\,\frac\lambda{|x|^2} u =f(y,x)\,|u|^p\,, \quad y\in\R\,,\quad x\in\R^{N-1}\,,
$$
with 
$a,b,f,\lambda$ satisfying the same assumptions of Example \ref{Ex:HLD}. One obtains a nonexistence result for any $p$ in the range \eqref{eq:HLDcrit}, by replacing~$N=n+1$ as usual.
\end{Rem}



\section*{Acknowledgments}

The authors thank Yuta Wakasugi for some early discussions on this topic.


\begin{thebibliography}{99}

\bibitem{DA12}
{\sc M. D'Abbicco}
{\it Small data solutions for semilinear wave equations with effective damping},
submitted to Proceedings of $9^{\mathrm{th}}$ AIMS Conference, Orlando, Florida, 2012, 9 pages.

\bibitem{DA12+}
{\sc M. D'Abbicco}
{\it Semilinear scale-invariant wave equations with time-dependent speed and damping}, preprint, 20 pages.

\bibitem{DAE12}
{\sc M. D'Abbicco, M.R. Ebert}, {\it Hyperbolic-like estimates for higher order equations}, J. Math. Anal. Appl. {\bf 395} (2012) 747--765. 

\bibitem{DAE13}
{\sc M. D'Abbicco, M.R. Ebert}, {\it A class of dissipative wave equations with time-dependent speed and damping}, J. Math. Anal. Appl. (2012), doi:10.1016/j.jmaa.2012.10.017.

\bibitem{DLR}
{\sc M. D'Abbicco, S. Lucente, M. Reissig,}
{\it Semilinear wave equations with effective damping},
\rm ArXiv: 1210.3493v1.

\bibitem{DL}
{\sc L. D'Ambrosio, S. Lucente,} {\it Nonlinear Liouville theorems
for Grushin and Tricomi operators,} J. Differential Equations, {\bf 123}
(2003), 511--541.

\bibitem{D}
\sc P. D'Ancona, \rm
{\it A Note on a Theorem of J\"orgens},
Math. Z., {\bf 218} (1995), 239--252.

\bibitem{DD}
\sc P. D'Ancona, A. Di Giuseppe, \rm
{\it Global Existence with Large Data for a Nonlinear Weakly Hyperbolic Equation},
Math. Nachr., {\bf 231} (2001), 5--23.

\bibitem{DeL}
\sc K. Deng and H.A. Levine. \it The role of critical exponents in blow-up theorems: the sequel,
\rm J. Math. Anal. Appl., \bf 243 \rm (2000), 85--126.

\bibitem{HL}
{\sc A. El Hamidi, G. Laptev},
\it Existence and nonexistence results for higher-order semilinear evolution inequalities with critical potential.
\rm J. Math. Anal. Appl., \bf 304 \rm (2005), 451--463.

\bibitem{Fanelli}
{\sc L. Fanelli}, {\it
Semilinear Schrödinger equation with time dependent coefficients},
Math. Nachr.  {\bf 282}, (2009) 976--994.

\bibitem{FL}
{\sc L. Fanelli, S. Lucente}, {\it The critical case for a semilinear weakly hyperbolic equation},
El. Journ. of Diff. Eq. {\bf 2004} No. 101 (2004), 1--13.

\bibitem{fujita}
{\sc H. Fujita},{\it On the blowing up of solutions of the Cauchy problem for $u_t=\Delta u+u^{1+\alpha}$},
  J. Fac. Sci. Univ. Tokyo, Sec. I, {\bf 13} (1966), 109--124.

\bibitem{IW}
{\sc M. Ikeda, Y. Wakasugi},
{\it Small data blow-up of $L^2$ solution for the nonlinear Schr\"odinger equation without gauge invariance}, ArXiv:1111.0178v3.

\bibitem{ITY}
{\sc R. Ikehata, G. Todorova, B. Yordanov},
{\it Critical exponent for semilinear wave equations with space-dependent potential},
Funkcialaj Ekvacioj, \textbf{52} (2009), 411--435.

\bibitem{Kato}
\sc T. Kato, \it Blow-up of solutions of some nonlinear hyperbolic equations,
\rm Commun. Pure Appl. Math., \bf 33 \rm (1980), 501--505.

\bibitem{L}
\sc H. A. Levine, \it The role of critical exponents in blow-up problems,
\rm SIAM Review, \bf 32 \rm (1990), 262--288.

\bibitem{LNZ11}
{\sc J. Lin, K. Nishihara, J. Zhai}
\it Decay property of solutions for damped wave equations with space-time dependent damping term,
\rm J. Math. Anal. Appl. \bf 374 \rm (2011), 602--614

\bibitem{LNZ}
{\sc J. Lin, K. Nishihara, J. Zhai}, \emph{Critical exponent for the semilinear wave equation with
time-dependent damping}, Discrete and Continuous Dynamical Systems, \bf 32 \rm (2012), 4307--4320.

\bibitem{Lu}
\sc S. Lucente, \it On a class of semilinear weakly hyperbolic equations,
\rm Annali dell'Universita di Ferrara \bf 52 \rm (2006), 317--335.

\bibitem{MP1}
\sc E. Mitidieri, S. I. Pohozaev,
\it The absence of Global Positive Solutions
to Quasilinear Elliptic Inequalities, \rm Doklady Mathematics, \bf 57 \rm (1998),
250--253.

\bibitem{MP2}
\sc E. Mitidieri, S. I. Pohozaev,
\it  Nonexistence of Positive Solutions for a
Systems of Quasilinear Elliptic Equations and Inequalities in $\R^n$, \rm Doklady
Mathematics , \bf 59 \rm (1999), 1351--1355.

\bibitem{MP}
{\sc E. Mitidieri, S.I. Pohozaev},
{\it Non-existence of Weak Solutions for some Degenerate Elliptic and Parabolic Problems on $\R^n$,}
Journal of Evolution Equations, {\bf 1} (2001), 189--220.

\bibitem{MPhyp}
{\sc E. Mitidieri, S.I. Pohozaev},
{\it Nonexistence of Weak Solutions for Some Degenerate and Singular Hyperbolic Problems on $\R^n$},
Proc. Steklov Institute of Mathematics {\bf 232} (2001), 240--259.

\bibitem{R}
{\sc M. Reissig}, \emph{Klein-Gordon type decay rates for wave equations with a time-dependent dissipation}, Adv. Math. Sci. Appl. \bf 11 \rm (2001), 859--891.

\bibitem{TsYa}
\sc Y. Tsutsumi, K. Yajima, \it The asymptotic behavior of nonlinear Schr\"odinger equations, \rm
Bull. Amer. Math. Soc., \bf 11 \rm (1984), 186--188.

\bibitem{strauss}
{\sc W.A. Strauss}, \it  Nonlinear wave equations, \rm CBMS Regional Conference Series in
Mathematics, {\bf 73}, Amer. Math. Soc. Providence, RI, 1989.

\bibitem{TY}
{\sc G. Todorova, B. Yordanov }, \emph{Critical Exponent for a Nonlinear Wave Equation with Damping},
J. Differential Equations, \bf 174 \rm (2001), 464--489.

\bibitem{W}
{\sc Y. Wakasugi}, \it Small data global existence for the semilinear wave equation with space-time dependent damping,
\rm ArXiv:1202.5379v1

\bibitem{W07}
{\sc J. Wirth}, \it Wave equations with time-dependent dissipation II. Effective dissipation,
\rm J. Differential Equations, \bf 232 \rm (2007), 74--103.

\bibitem{zhang}
{\sc Q. S. Zhang}, \emph{A blow-up result for a nonlinear wave equation with damping: the critical case},
C. R. Acad. Sci. Paris S\'{e}r. I Math., \bf 333 \rm (2001), 109--114.

\end{thebibliography}
\end{document}